\theoremstyle{definition}
\newtheorem{thm}{Theorem}[section]
\newtheorem{lem}[thm]{Lemma}
\newtheorem{cor}[thm]{Corollary}
\newtheorem{prop}[thm]{Proposition}
\newtheorem{rem}[thm]{\rm Remark}
\newtheorem{ex}[thm]{\rm Example}
\numberwithin{equation}{section}
\newcommand{\Z}{\mathbb{Z}}
\newcommand{\F}{\mathbb{F}}
\newcommand{\ismax}{\operatorname{ismax}}
\newcommand{\argmax}{\operatorname{argmax}}
\newcommand{\argmin}{\operatorname{argmin}}
\newcommand{\nummax}{\mathrm{nummax}}
\title{Polynomial expressions of $p$-ary auction functions}
\author{Shizuo Kaji}
\address[Shizuo Kaji]{Yamaguchi University, Japan \endgraf
Japan Science and Technology Agency (JST) PRESTO Researcher}
\email{skaji@yamaguchi-u.ac.jp}
\author{Toshiaki Maeno}
\address[Toshiaki Maeno]{Meijo University, Japan}
\email{tmaeno@meijo-u.ac.jp}
\author{Koji Nuida}
\address[Koji Nuida]{National Institute of Advanced Industrial Science and Technology (AIST), Japan \endgraf
Japan Science and Technology Agency (JST) PRESTO Researcher}
\email{k.nuida@aist.go.jp}
\author{Yasuhide Numata}
\thanks{The fourth named author was partially supported by KAKENHI, Grant-in-Aid for Young 
     Scientists (B) JP25800009.}
\address[Yasuhide Numata]{Shinshu University, Japan}
\email{nu@math.shinshu-u.ac.jp}
\keywords{Polynomial expression of functions, finite fields, cryptography}
\subjclass[2010]{68R05, 12Y05}
\date{\today}
\begin{document}
\begin{abstract}
Let $\F_p$ be the finite field of prime order $p$.
For any function $f \colon \F_p{}^n \to \F_p$,
there exists a unique polynomial over $\F_p$ having degree at most $p-1$ with respect to each variable
which coincides with $f$. We call it the minimal polynomial of $f$.
It is in general a non-trivial task to find a concrete expression of the minimal polynomial of a given function, 
which has only been worked out for limited classes of functions in the literature.
In this paper, we study minimal polynomial expressions of several functions that are closely related to some practically important procedures such as auction and voting.
\end{abstract}

\maketitle

\section{Introduction}
Let $p$ be a prime and $\F_p$ the finite field of order $p$.
It is well-known that any function $f \colon \F_p{}^n \to \F_p$ {can be} expressed as a polynomial $P(x_1,\dots,x_n)$
with coefficients in $\F_p$, and such a polynomial is unique if its degree with respect to each variable is restricted to be at most $p-1$; we call the unique polynomial $P$ the \emph{minimal polynomial} of the function $f$.
In theory, it is easy by Fermat's Little Theorem to see that the polynomial $P$ is given by 
$\sum_{(a_1,\dots,a_n)\in \F_p{}^n} f(a_1,\dots,a_n) \delta_{a_1}(x_1) \cdots \delta_{a_n}(x_n)$, 
where $\delta_{a_i}(x_i)=1-(x_i-a_i)^{p-1}$  is the minimal polynomial for the Kronecker delta.
This expression, however, has two shortcomings; it relies on the (often implicit) values $f(a_1,\dots,a_n)$ of the function,
and it in general contains many redundant terms to be cancelled out.
As a result, it remains a non-trivial task to obtain an \emph{explicit} and concise minimal polynomial expression for a given concrete function $f$.
For example, Sturtivant and Frandsen \cite[Theorems 9.1(a) and 11.2]{SF93} showed that the carry function in multiplication of $p$-ary integers is expressed by using number-theoretic objects such as the Bernoulli numbers and Wilson's quotient (see also \cite{carry} for a different approach to the result and an expression of the carry function in the case of addition of $p$-ary integers).
As this previous result suggests,
the problem of computing minimal polynomial expressions of certain functions 
can lead to interesting theoretical results connecting different fields of mathematics.

On the other hand, this problem has potential applications in cryptography as well. 
There was recently a breakthrough in the area of cryptography, namely the discovery of \emph{fully homomorphic encryption} 
(see \cite{Gen14,Sil13} for survey).
One can compute \emph{in an encrypted form} both addition and multiplication over the two-element field $\F_2$ (see \cite{Gen09}, etc.{}) 
and over even larger finite prime fields $\F_p$ for $p > 2$ (see \cite{NK15}).
{It follows that one can compute any function provided the function is explicitly written as a polynomial over $\F_p$.
For example, a recent work \cite{CKK15} on practical cryptographic systems based on fully homomorphic encryption relies on a recursive polynomial expression of the comparison function for two binary integers.
To develop such practical systems, \lq\lq efficient'' polynomial expressions of various functions are useful, and in particular, the minimal degree condition is important since encrypted multiplication is in general computationally much more expensive than encrypted addition.}

In this paper, we study minimal polynomial expressions of a certain kind of functions specified below.
They are relevant to some practical procedures such as auction and voting.
We chiefly discuss the $\max$ function that takes an element of $\F_p{}^n$ as input and returns the largest value among them,
 and the $\argmax$ function that returns the least index of the largest component(s) in the input vector in $\F_p{}^n$.
Here we clarify that, the finite field $\F_p$ is naturally identified with the subset $\{0,1,\dots,p-1\}$ of integers, and comparison of elements (e.g., in the function $\max$) is performed in the latter, 
while addition and multiplication are done in the former.
The output of $\argmax$ is an integer that may exceed the range of the field $\F_p$ when $n\ge p$.
To handle this, we introduce an $\F_p$-valued function $\argmax^{(r)}$ that returns the $r$-th digit of the $p$-ary expansion of $\argmax$.

In \S \ref{sec:notation}, we define and give the minimal polynomial for the
\lq\lq low-pass filtering function'' $L_t(x)$
and the Kronecker delta function $\delta_t(x)$, which are used as building blocks in the later sections.
In \S \ref{sec:max}, we give a minimal polynomial expression of the function $\max$  in terms of 
$L_t(x)$ and $\delta_t(x)$.
However, these general expressions contain many terms. We derive more concise forms for $p=2$ and $3$
(Corollary \ref{psi-p-2} and Proposition \ref{psi-p-3}).
A duality between $\max$ and $\min$ allows us to deduce corresponding formulae for $\min$.
\S \ref{sec:argmax} is devoted to the study of the $\argmax$ function.
First, we provide a way of reducing the computation for $\argmax^{(r)}$ with any $r \geq 0$ to the computation for $\argmax^{(0)}$ by utilising the result on the function $\max$.
We also provide a recursive formula for $\argmax^{(r)}$ with respect to the input length.
They are used to derive minimal polynomial expressions of $\argmax^{(r)}$ when $p = 2$
and of $\argmax^{(0)}$ for $p = 3$ and $n = 3$ 
(Propositions \ref{prop:argmax_p=2_first}, \ref{prop:argmax_p=2_second}, and \ref{prop:argmax_p=3_n=3}).
The recursive formula for $\argmax^{(r)}$ relies on the (minimal) polynomial expression of $\argmax^{(0)}$ with input length of two.
We give a minimal polynomial expression of $\argmax^{(0)}$ for $n = 2$ and any $p$ in \S \ref{sec:two-variables},
 which also yields a minimal polynomial expression of $\max$ with $n = 2$ (Proposition \ref{prop:argmax_n=2} and Theorem \ref{prop:max_n=2}).

In \S \ref{sec:other_functions}, we introduce and study two more functions that are also relevant to our problem.
We recall that the definition of $\argmax$ enforces the function to always output the first index when there are ties in the input vectors; this then loses the information on the other largest components of the input.
To remedy this situation, 
we introduce the function \lq\lq $\ismax(y;x)$'' that returns if the maximum value among the components of the input vector $x$ is equal to the other input value $y \in \F_p$, and \lq\lq $\nummax(x)$'' that returns the number of inputs which attain the tied maximum.
Then, similarly to the cases of $\max$ and $\argmax$, we provide a general formula for the
minimal polynomial expression of $\ismax$ and $\nummax$ in terms of the low-pass filtering functions and the Kronecker delta functions, and also compute concise forms of minimal polynomial expressions of $\ismax$ for $p = 2$ and $3$, and of $\nummax$ for $p=2$.

We conclude with a possible extension of our result to a multi-digit setting in \S \ref{sec:multi-digit}.

\subsection*{Acknowledgement}
The authors would like to thank Takuro Abe 
for fruitful discussions.

\section{Notation and Basic functions}
\label{sec:notation}
In this section, we fix some notations used throughout the paper.
A vector $x$ of length $n$ over the field $\F_p$ is denoted by $(x_0,x_1,\ldots,x_{n-1})$.
We introduce a linear ordering $<$ on $\F_p$ via 
the natural identification of it with the subset $\{0,1,\ldots,p-1\}$ of $\Z$ (with the usual ordering $<$).
We denote by $e_i(x)$ the $i$-th elementary symmetric polynomial of $x_0,x_1,\ldots,x_{n-1}$
so that $\prod_{i=0}^{n-1} (1+x_i) = \sum_{i=0}^n e_i(x)$.

For a logical formula $P$ with free variable $x$, we define its \emph{truth function} by
\[
\chi_P(x) = \begin{cases} 1 & (P(x) \text{ is true}) \\ 0 & (\text{otherwise}) \end{cases}
\]
which is often abbreviated as $\chi_P(x)=\chi(P)$.
We frequently use the same symbol for a function and its polynomial expression.
\begin{ex}\label{ex:delta}
For $t \in \F_p$, the minimal polynomial for the \emph{delta function} $\delta_t(x) = \chi(x=t)$ is given by
\[
\delta_t(x) = 1-(x-t)^{p-1}=-\prod_{i=1}^{p-1}(x-t+i) \enspace,
\]
which follows from Fermat's Little Theorem.
Similarly, the minimal polynomial for the \emph{low-pass function} $L_t(x) = \chi(x<t)$ is given by
\[
L_t(x) = \sum_{0 \le k < t} \delta_k(x) = \sum_{0 \le k < t}\left( 1-(x-k)^{p-1} \right) \enspace.
\]
\end{ex}

For an integer $k \geq 0$, its $r$-th digit in the $p$-ary expansion is denoted by $k^{(r)}$;
that is, $k=\sum_{r=0}^\infty k^{(r)} p^r$ with $k^{(r)} \in \{0,1,\ldots,p-1\}$ for each $r$.

\section{Polynomial expressions of the $\max$ and the $\min$ functions}
\label{sec:max}
For a vector $x = (x_0,x_1,\ldots,x_{n-1}) \in \F_p{}^n$, let $\max(x)$ (respectively, $\min(x)$) denote the maximum (respectively, minimum) among the $n$ values $x_0,x_1,\ldots,x_{n-1}$.

Using the functions in Example \ref{ex:delta}, we immediately obtain the minimal polynomial of $\max$.
\begin{prop}\label{prop:max-general}
The minimal polynomial of $\max$ is given by
\begin{eqnarray*}
\max(x) &=& \sum_{1 \le t \le p-1} \chi( x_i\ge t \mbox{ for some } i ) = 
\sum_{1 \le t \le p-1} \left( 1- \prod_{i=0}^{n-1} L_t(x_i) \right) \enspace.
\end{eqnarray*}
\end{prop}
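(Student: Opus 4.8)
The plan is to establish the identity in two stages, matching the two equalities in the statement.

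For the first equality, I would argue that the maximum of a collection of nonnegative integers can be recovered by counting how many thresholds it exceeds. Concretely, for any fixed value $v \in \{0,1,\ldots,p-1\}$, one has the telescoping identity $v = \sum_{1 \le t \le p-1} \chi(v \ge t)$, since exactly the thresholds $t = 1, 2, \ldots, v$ contribute a $1$ and the rest contribute $0$. Applying this with $v = \max(x)$ and observing that $\max(x) \ge t$ holds if and only if $x_i \ge t$ for some index $i$, I obtain $\max(x) = \sum_{1 \le t \le p-1} \chi(x_i \ge t \text{ for some } i)$, which is the first claimed expression.

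For the second equality, I would rewrite each truth function $\chi(x_i \ge t \text{ for some } i)$ using its logical complement. The event ``$x_i \ge t$ for some $i$'' fails precisely when $x_i < t$ for every $i$, so $\chi(x_i \ge t \text{ for some } i) = 1 - \prod_{i=0}^{n-1} \chi(x_i < t) = 1 - \prod_{i=0}^{n-1} L_t(x_i)$, where I have used the definition $L_t(x) = \chi(x < t)$ from Example \ref{ex:delta} together with the fact that a product of $\{0,1\}$-valued indicators equals the indicator of the conjunction. Summing over $1 \le t \le p-1$ yields the second expression.

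Finally I would confirm that the resulting polynomial is genuinely the minimal polynomial, not merely some polynomial agreeing with $\max$. Since $L_t(x_i)$ is by construction the minimal polynomial of $\chi(x_i < t)$ from Example \ref{ex:delta}, it has degree at most $p-1$ in $x_i$; the product $\prod_i L_t(x_i)$ therefore has degree at most $p-1$ in each variable separately, and this per-variable degree bound is preserved under the sum over $t$ and the subtraction of constants. By the uniqueness of the minimal polynomial stated in the introduction, the function-level identity combined with the degree bound forces the expression to be \emph{the} minimal polynomial of $\max$. I do not anticipate a serious obstacle here; the only point demanding a little care is the bookkeeping in the first equality, making sure the summation range $1 \le t \le p-1$ exactly reproduces the integer value $\max(x)$ rather than overshooting or undershooting by one.
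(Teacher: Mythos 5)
Your proposal is correct and follows essentially the same route the paper takes (the paper states the result as an immediate consequence of Example \ref{ex:delta}, the underlying argument being exactly your threshold-counting identity $\max(x)=\sum_{1\le t\le p-1}\chi(\max(x)\ge t)$ together with the complementation $\chi(x_i\ge t\text{ for some }i)=1-\prod_i L_t(x_i)$ and the per-variable degree bound). Your explicit check of minimality via the degree-$(p-1)$ bound and uniqueness is exactly the justification the paper leaves implicit.
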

In particular, when $p=2$ this simplifies:
\begin{cor}
\label{psi-p-2}
The minimal polynomial of $\max(x)$ for $p=2$ is given by
\[
\max(x)=\prod_{i=0}^{n-1} (1+x_i)-1
=\sum_{i=1}^{n}e_{i}(x) \enspace.
\]
\end{cor}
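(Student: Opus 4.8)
The plan is to specialise the general formula of Proposition~\ref{prop:max-general} to the field $\F_2$ and then simplify algebraically. Since $p=2$, the summation index $t$ in $\sum_{1 \le t \le p-1}$ ranges over the single value $t=1$, so the entire sum collapses to one term and the formula reads
\[
\max(x) = 1 - \prod_{i=0}^{n-1} L_1(x_i) \enspace.
\]
The first thing I would do is evaluate the single low-pass factor. By Example~\ref{ex:delta} we have $L_1(x) = \delta_0(x) = 1 - x^{p-1}$, which for $p=2$ is simply $1 - x$. Substituting this into the displayed identity gives $\max(x) = 1 - \prod_{i=0}^{n-1}(1 - x_i)$.

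Next I would pass to arithmetic over $\F_2$, where $-1 = 1$. This lets me replace each $1 - x_i$ by $1 + x_i$ and likewise flip the outer sign, turning the expression into $\max(x) = \prod_{i=0}^{n-1}(1 + x_i) - 1$, which is the first claimed equality. For the second equality I would invoke the defining identity of the elementary symmetric polynomials recalled in Section~\ref{sec:notation}, namely $\prod_{i=0}^{n-1}(1 + x_i) = \sum_{i=0}^{n} e_i(x)$. Since $e_0(x) = 1$, subtracting $1$ removes exactly the $i=0$ term and yields $\sum_{i=1}^{n} e_i(x)$, completing the chain of equalities.

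Finally I would check that no further reduction is required for minimality: each factor $1 + x_i$ has degree $1 = p-1$ in the variable $x_i$ and involves no other variable, so the product—and hence the whole expression—has degree at most $p-1$ in each variable. It is therefore automatically the \emph{minimal} polynomial, inheriting this status from Proposition~\ref{prop:max-general}. I do not anticipate any genuine obstacle here, as the corollary is a direct specialisation; the only point deserving care is the sign bookkeeping under the identification $-1 = 1$ in $\F_2$, which must be applied consistently to both the factors $1 - x_i$ and the leading constant.
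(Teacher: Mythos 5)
Your proposal is correct and follows exactly the route the paper intends: the corollary is stated as an immediate specialisation of Proposition~\ref{prop:max-general} (the paper gives no explicit proof beyond ``In particular, when $p=2$ this simplifies''), and your computation---collapsing the sum to $t=1$, evaluating $L_1(x_i)=1+x_i$ over $\F_2$, and invoking $\prod_{i=0}^{n-1}(1+x_i)=\sum_{i=0}^{n}e_i(x)$ with $e_0=1$---fills in precisely those details, including the degree check for minimality. (The paper later sketches an alternative proof via the observation that $\max(x)+1$ is divisible by each $1+x_i$, but your argument matches the primary one.)
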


However when $p>2$, the expression in Proposition \ref{prop:max-general} consists of a lot of terms.
We now compute a more concise expression for $p=3$.

First we note that $\max(x)+1=0$ if $x_i=p-1$ for some $x_i$.
This implies that the minimal polynomial of $\max(x) + 1$ has $1 + x_i$ as a factor for every $i$. Therefore, we have
\[
\max(x) = f_n(x)\prod_{i=0}^{n-1} (1+x_i) - 1 = f_n(x)\sum_{i=0}^{n}e_i(x) - 1
\]
for some polynomial $f_n(x)$ in which each variable $x_i$ has degree at most $p-2$.
In particular, this observation yields another proof of Corollary \ref{psi-p-2} (where $p = 2$).
For the case $p = 3$, we have the following result:

\begin{prop}
\label{psi-p-3}
When $p=3$, a minimal polynomial expression for $\max(x)$ is given by:
\[
\max(x)
=\sum_{i=0}^{\lfloor n/2 \rfloor} e_{2i}(x) \sum_{i=0}^{n}e_{i}(x)  -1 
\enspace.
\]
\end{prop}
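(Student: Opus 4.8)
The plan is to pin down the cofactor $f_n(x)$ in the factorization
\[
\max(x) = f_n(x)\prod_{i=0}^{n-1}(1+x_i) - 1 = f_n(x)\sum_{i=0}^{n}e_i(x)-1
\]
established just above, and to show it coincides with $g_n(x):=\sum_{i=0}^{\lfloor n/2\rfloor}e_{2i}(x)$. Since $p=3$, the degree bound there forces each variable of $f_n$ to have degree at most $p-2=1$, so $f_n$ is multilinear; the candidate $g_n$, being a sum of elementary symmetric polynomials, is multilinear as well. Because $\sum_{i=0}^n e_i(x)=\prod_i(1+x_i)$ is also multilinear, the product $g_n\sum_i e_i$ has degree at most $2$ in each variable, which is exactly the minimal-degree condition; hence proving $f_n=g_n$ as polynomials establishes both the formula and its minimality.

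The key reduction is that a multilinear polynomial over $\F_3$ is uniquely determined by its values on the subcube $\{0,1\}^n$. Ordering the subsets $S\subseteq\{0,\dots,n-1\}$ by cardinality, the matrix whose $(S,T)$-entry is the value of the monomial $\prod_{i\in S}x_i$ at the indicator vector of $T$ equals $1$ when $S\subseteq T$ and $0$ otherwise; this matrix is unitriangular, hence invertible. It therefore suffices to check that $f_n$ and $g_n$ agree at every point of $\{0,1\}^n$.

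To evaluate $f_n$ there, observe that $\prod_i(1+x_i)$ vanishes as soon as some coordinate equals $2$, so the factorization only constrains $f_n$ on the subcube, where $f_n(x)=(\max(x)+1)\big/\prod_i(1+x_i)$. At a point with exactly $k$ coordinates equal to $1$ (the rest being $0$), the product equals $2^{k}$ while $\max(x)+1$ equals $1$ for $k=0$ and $2$ for $k\ge1$; using $2\equiv-1\pmod 3$ this gives $f_n=1$ when $k=0$ or $k$ is odd, and $f_n=2$ when $k$ is positive and even. For the candidate, the same point gives $e_{2i}(x)=\binom{k}{2i}$, so $g_n=\sum_i\binom{k}{2i}$, which equals $1$ for $k=0$ and $2^{k-1}$ for $k\ge1$; reducing modulo $3$ reproduces exactly the same case distinction. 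Thus $f_n$ and $g_n$ coincide on $\{0,1\}^n$, and by the uniqueness step they are equal.

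The only genuinely delicate points are the uniqueness lemma—that matching values on the subcube suffices for multilinear polynomials—and the bookkeeping of the reductions of $2^k$, $2^{k-1}$, and the binomial sum $\sum_i\binom{k}{2i}=2^{k-1}$ modulo $3$; everything else is routine, and once these are in place the identification $f_n=g_n$ is immediate.
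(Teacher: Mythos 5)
Your proof is correct, but it takes a genuinely different route from the paper's. The paper proceeds by direct verification over all of $\F_3^n$: minimality is noted from the degrees, and the identity is then checked in the three cases $\max(x)=2,1,0$, the key tool being the algebraic identity $\sum_{i=0}^{\lfloor n/2\rfloor}e_{2i}(x)\cdot\sum_{i=0}^{n}e_i(x)=2\bigl(\prod_{i=0}^{n-1}(1+x_i)^2+\prod_{i=0}^{n-1}(1-x_i^2)\bigr)$, which holds because the even part $\sum_i e_{2i}$ equals $2^{-1}\bigl(\prod_i(1+x_i)+\prod_i(1-x_i)\bigr)$ and $2^{-1}=2$ in $\F_3$. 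You instead pin down the multilinear cofactor $f_n$ in the factorization $\max(x)+1=f_n(x)\prod_i(1+x_i)$ (legitimately quoted from the discussion preceding the proposition), prove an interpolation lemma --- multilinear polynomials over $\F_3$ are determined by their values on $\{0,1\}^n$, via the evaluation matrix indexed by subsets, which is triangular with unit diagonal under any ordering refining cardinality --- and then compare $f_n$ with $\sum_i e_{2i}$ only on the subcube. Your bookkeeping is right: at a $0/1$ point with $k$ ones, $f_n=(\max(x)+1)\cdot(-1)^k$, which is $1$ for $k=0$ or $k$ odd and $2$ for $k>0$ even, while $\sum_i\binom{k}{2i}$ is $1$ for $k=0$ and $2^{k-1}\equiv(-1)^{k-1}$ for $k\ge1$, giving the same values; and since both factors in the final product are multilinear, the degree bound for minimality is automatic. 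As for what each approach buys: the paper's argument is shorter and self-contained, needing no interpolation lemma and only the even-part identity; yours confines all evaluation to the Boolean subcube rather than all of $\F_3^n$, explains how the formula could have been discovered (interpolate the quotient $(\max(x)+1)/\prod_i(1+x_i)$ on the subcube by a multilinear polynomial), and the same method would in principle guide a search for the cofactor $f_n$ for larger primes, where the paper remarks that a general formula appears difficult.
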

\begin{proof}
Denote the right hand side by $P(x)$.
As the minimality condition on the degree is satisfied for $P(x)$, 
it suffices to verify $\max(x)=P(x)$ for any $x\in \F_p{}^n$.
When $\max(x)=2$, there exists $i$ such that $x_i=2$. This implies $\prod_{i=0}^{n-1} (1+x_i)=\sum_{i=0}^{n}e_{i}(x)=0$
and $P(x)=-1=2$.
Notice that 
\[
\sum_{i=0}^{\lfloor n/2 \rfloor} e_{2i}(x) \sum_{i=0}^{n}e_{i}(x)=2\left(\prod_{i=0}^{n-1} (1+x_i)^2 + \prod_{i=0}^{n-1} (1-x_i^2) \right)
\]
by the definition of $e_i(x)$ and the fact $2^{-1}=2$ in $\F_3$.
When $\max(x)=1$, as $\prod_{i=0}^{n-1} (1+x_i)^2=1$ and 
$\prod_{i=0}^{n-1} (1-x_i^2)=0$, we have $P(x)=2(1+0) - 1 = 1$.
When $\max(x)=0$, as $\prod_{i=0}^{n-1} (1+x_i)^2=1$ and 
$\prod_{i=0}^{n-1} (1-x_i^2)=1$, we have $P(x)=2(1+1) - 1 = 0$.
\end{proof}

To obtain a minimal polynomial expression for $\min$, we exploit a duality between $\max$ and $\min$.
Define an \emph{involution} on $\F_p$ by $\bar{x} = p-1-x$ and extend it coordinate-wisely on $\F_p{}^n$.
Then, we have $\overline{\min(x)} = \max(\bar{x})$ for any $x\in \F_p{}^n$. 
Thus, a minimal polynomial expression for $\max$ converts to one of $\min$ and vice versa.
For example, Corollary \ref{psi-p-2} and Proposition \ref{psi-p-3} imply the following:
\begin{cor}
When $p=2$, a minimal polynomial expression for $\min$ is given by
\[
\min(x) = \prod_{i=0}^{n-1} x_i = e_n(x) \enspace.
\]
When $p=3$, a minimal polynomial expression for $\min$ is given by
\[
\min(x) = \prod_{i=0}^{n-1} x_i^2 + \prod_{i=0}^{n-1} x_i(1-x_i) = e_n \left( 1+\sum_{i=1}^n (-1)^i e_i+e_n \right) \enspace.
\]
\end{cor}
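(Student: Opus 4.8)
The plan is to apply the duality $\overline{\min(x)} = \max(\bar{x})$ established just above, which rearranges (since the bar is an involution) to $\min(x) = \overline{\max(\bar{x})} = (p-1) - \max(\bar{x})$, and then to substitute the explicit formulas for $\max$ furnished by Corollary \ref{psi-p-2} and Proposition \ref{psi-p-3}, simplifying the results by arithmetic in $\F_p$. Because the involution $\bar{x}_i = p-1-x_i$ is affine in each variable, substituting it into a polynomial already satisfying the per-variable degree bound produces another such polynomial; hence the minimality condition is inherited automatically and no separate degree check is required.

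For $p=2$ I would begin from $\max(x) = \prod_{i=0}^{n-1}(1+x_i) - 1$ and put $\bar{x}_i = 1 - x_i$. The decisive observation is that $1 + \bar{x}_i = 2 - x_i = x_i$ in $\F_2$, so $\max(\bar{x}) = \prod_{i=0}^{n-1} x_i - 1$. Then $\min(x) = 1 - \max(\bar{x}) = 2 - \prod_{i=0}^{n-1} x_i = \prod_{i=0}^{n-1} x_i = e_n(x)$, where the final reductions again use $2 = 0$ and $-1 = 1$ in $\F_2$.

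For $p=3$ I would use the factored form of $\max$ appearing inside the proof of Proposition \ref{psi-p-3}, namely $\max(x) = 2\bigl(\prod_{i=0}^{n-1}(1+x_i)^2 + \prod_{i=0}^{n-1}(1-x_i^2)\bigr) - 1$, and substitute $\bar{x}_i = 2 - x_i$. The two crucial simplifications in $\F_3$ are $1 + \bar{x}_i = 3 - x_i = -x_i$, whence $(1+\bar{x}_i)^2 = x_i^2$, and $1 - \bar{x}_i^2 = 1 - (2-x_i)^2 = x_i - x_i^2 = x_i(1-x_i)$ after reducing $4 \equiv 1$. This gives $\max(\bar{x}) = 2\bigl(\prod x_i^2 + \prod x_i(1-x_i)\bigr) - 1$, and therefore $\min(x) = 2 - \max(\bar{x}) = \prod_{i=0}^{n-1} x_i^2 + \prod_{i=0}^{n-1} x_i(1-x_i)$, using $-2 = 1$ in $\F_3$.

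To recast the $p=3$ answer in elementary-symmetric form I would write $\prod x_i^2 = e_n^2$ and factor $\prod x_i(1-x_i) = e_n \cdot \prod(1-x_i) = e_n\bigl(1 + \sum_{i=1}^n (-1)^i e_i\bigr)$, invoking the generating identity $\prod_{i=0}^{n-1}(1-x_i) = \sum_{i=0}^n (-1)^i e_i$. Collecting terms and factoring out $e_n$ then yields $e_n\bigl(1 + \sum_{i=1}^n (-1)^i e_i + e_n\bigr)$, as claimed. I do not foresee any genuine obstacle: the argument is a mechanical substitution, and the only point demanding care is the bookkeeping of the modular reductions $3 \equiv 0$, $4 \equiv 1$, $-2 \equiv 1$ in $\F_3$ together with the signs in the symmetric-function identity.
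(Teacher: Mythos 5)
Your proposal is correct and follows exactly the route the paper intends: apply the duality $\min(x) = \overline{\max(\bar{x})}$ to the formulas of Corollary \ref{psi-p-2} and Proposition \ref{psi-p-3} (using the factored form $2\bigl(\prod_i(1+x_i)^2+\prod_i(1-x_i^2)\bigr)-1$ from the latter's proof), with the affine substitution automatically preserving the per-variable degree bound. Your modular bookkeeping and the symmetric-function rewriting via $\prod_i(1-x_i)=\sum_{i=0}^n(-1)^i e_i$ are all accurate.
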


For the next case of $p = 5$, minimal polynomial expressions of $\max(x)$ for small values of $n$ in terms of elementary symmetric polynomials can be determined by direct calculation:

\begin{ex}
When $p=5$, the following are minimal polynomial expressions.
\begin{itemize}
\item $\max(x_0,x_1)=(1+e_1+e_2)(1+2e_1^2e_2+4e_1e_2+e_2)-1$
\item $\max(x_0,x_1,x_2)=(1+e_1+e_2+e_3)(
1+2e_1^2e_2+e_1e_2e_3+2e_1 e_3^2+e_2^2 e_3+2e_2e_3^2+4e_1e_2+3e_1e_3+e_2e_3+3e_3^2+e_2)-1$
\end{itemize}
\end{ex}

However, it seems to be difficult to obtain a general formula (such as Proposition\ref{psi-p-3}) for $p \ge 5$.
The function $\max(x)$ with $n = 2$ for any $p$ will be revisited in \S \ref{sec:argmax}.

\begin{rem}
The function $\max(x)$ is a symmetric function (in variables $x_0,x_1,\ldots,x_{n-1}$), and satisfies $\max(x,0)=\max(x)$ and an \lq\lq associativity'' in the following sense:
\[
\max(x_0,x_1,\ldots,x_{n-1},x_n)=\max(\max(x_0,\ldots,x_{n-1}),x_n)=\max(x_0,\max(x_1,\ldots,x_n)) \enspace.
\]
By using this property recursively, a minimal polynomial expression of the $\max$ function with two variables (i.e., for the case $n = 2$) yields a polynomial expression of $\max$ with any number of variables (i.e., for any $n$).
However, the polynomial thus obtained is \emph{not} the minimal polynomial in general.
\end{rem}

\section{Polynomial expressions of the $\argmax$ function}
\label{sec:argmax}

Let $\argmax(x)$ be the least integer $i$ such that $x_i=\max(x)$. 
Note that $\argmax(x)$ takes values in $\{0,1,\ldots,n-1\}$ so
we define for $r \geq 0$
\[
\argmax^{(r)} \colon \F_p^n \to \F_p \enspace,\quad \argmax^{(r)}(x) = \argmax(x)^{(r)},
\]
where $\argmax^{(r)}(x)$ is the $r$-th digit in the $p$-ary expansion of $\argmax(x)$.

Again using the functions in Example \ref{ex:delta}, we immediately obtain the minimal polynomial of $\argmax^{(r)}$.
\begin{prop}
The minimal polynomial for $\argmax^{(r)}$ is given by
\begin{eqnarray*}
\argmax^{(r)}(x)&=& \sum_{i=0}^{n-1}  i^{(r)} \cdot \chi(\argmax(x)=i)  \\
&=&  \sum_{i=0}^{n-1}  i^{(r)} \left( \sum_{0 \le t \le p-1} \left( 
 \delta_t(x_i) \prod_{0 \le j<i} L_t (x_j) \prod_{i<k \le n-1} L_{t+1} (x_k) \right)
  \right) \enspace.
\end{eqnarray*}
\end{prop}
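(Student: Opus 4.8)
The plan is to establish the two displayed equalities in turn. The first equality is essentially a definition-unpacking: since $\argmax(x)$ takes values in $\{0,1,\ldots,n-1\}$, the digit $\argmax^{(r)}(x)$ equals $i^{(r)}$ precisely when $\argmax(x)=i$, and the truth functions $\chi(\argmax(x)=i)$ for $i=0,\ldots,n-1$ form a partition of unity on $\F_p{}^n$. Thus $\argmax^{(r)}(x)=\sum_{i=0}^{n-1} i^{(r)}\chi(\argmax(x)=i)$ holds as an identity of functions, and the real content lies in the second equality, namely in giving a polynomial expression for each indicator $\chi(\argmax(x)=i)$.

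First I would fix an index $i$ and analyze the event $\argmax(x)=i$ combinatorially. By definition of $\argmax$ as the \emph{least} index attaining the maximum, the condition $\argmax(x)=i$ is equivalent to the conjunction of three requirements for some common value $t=\max(x)$: that $x_i=t$; that every earlier coordinate $x_j$ with $j<i$ is strictly smaller than $t$ (otherwise a smaller index would attain the maximum); and that every later coordinate $x_k$ with $k>i$ is at most $t$ (it may equal $t$, but since $k>i$ this does not affect that $i$ is the least maximizer). Summing over the possible maximum values $t\in\F_p$, which are mutually exclusive, gives
\[
\chi(\argmax(x)=i)=\sum_{0\le t\le p-1}\chi(x_i=t)\cdot\chi\bigl(x_j<t\ \forall j<i\bigr)\cdot\chi\bigl(x_k\le t\ \forall k>i\bigr).
\]
Next I would translate each elementary condition into the building-block polynomials of Example \ref{ex:delta}: the condition $x_i=t$ is $\delta_t(x_i)$; the condition $x_j<t$ is $L_t(x_j)$; and the condition $x_k\le t$, i.e. $x_k<t+1$, is $L_{t+1}(x_k)$. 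Since a conjunction of independent conditions corresponds to a product of truth functions, this yields exactly
\[
\chi(\argmax(x)=i)=\sum_{0\le t\le p-1}\delta_t(x_i)\prod_{0\le j<i}L_t(x_j)\prod_{i<k\le n-1}L_{t+1}(x_k),
\]
and substituting into the first equality gives the claimed formula.

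The main obstacle, and the point needing care, is the minimality claim: one must verify that the resulting polynomial has degree at most $p-1$ in each variable, since only then is it \emph{the} minimal polynomial rather than merely \emph{a} polynomial expression. Here I would observe that each variable $x_m$ appears in each summand through exactly one factor — either $\delta_t(x_m)$ (when $m=i$), or $L_t(x_m)$, or $L_{t+1}(x_m)$ — and that $\delta_t$, $L_t$, and $L_{t+1}$ are each already written in Example \ref{ex:delta} as their minimal polynomials of degree at most $p-1$. Since summing over $t$ does not increase the per-variable degree, the total expression inherits the degree bound, and by the uniqueness statement recalled in the introduction it must be the minimal polynomial. A minor point to note is that the boundary value $t=p-1$ contributes the factors $L_p(x_k)=1$, consistent with the convention that no coordinate exceeds $p-1$, so the $t$-sum is correctly taken over all of $\F_p$.
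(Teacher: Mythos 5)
Your proof is correct and is essentially the argument the paper has in mind: the paper states this proposition without proof as ``immediate'' from Example \ref{ex:delta}, and your decomposition (summing over the maximum value $t$, with $\delta_t(x_i)$, $L_t(x_j)$ for $j<i$, and $L_{t+1}(x_k)$ for $k>i$, followed by the per-variable degree bound and uniqueness of the reduced polynomial) is exactly the reasoning the paper itself spells out when proving the analogous proposition for $\ismax$ and $\nummax$ in \S\ref{sec:other_functions}. Your attention to the boundary convention $L_p \equiv 1$ and to the minimality verification simply fills in details the paper omits.
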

\begin{rem}
Let $\argmin(x)$ be the function which returns the least index $i$ with $\min(x) = x_i$.
A minimal polynomial expression of $\argmin$ 
 is obtained from one of $\argmax$ via the duality $\argmin(x)=\argmax(\bar{x})$ 
 similarly to the case of $\min$ discussed in \S \ref{sec:max}.
\end{rem}

Observe by definition of the function $\argmax^{(r)}$ that
\begin{equation}
\label{eq:argmax_recurrence}
\argmax^{(r)}(x) = \argmax^{(0)}(\max(x_{0},x_{1},\ldots,x_{p^{r}-1}),
\ldots,
\max(x_{i\cdot p^r},x_{i\cdot p^r+1},\ldots,x_{(i+1)\cdot p^{r}-1}),
\ldots) \enspace,
\end{equation}
\begin{equation}
\label{eq:argmax_recursive}
\begin{split}
\argmax^{(r)}(x_0,\ldots,x_{n-1},x_{n})
=& \argmax^{(r)}(x_0,\ldots,x_{n-1}) \cdot \bigl( 1-\argmax^{(0)}(\max(x_0,\ldots,x_{n-1}),x_{n}) \bigr) \\
& + n^{(r)} \cdot \argmax^{(0)}(\max(x_0,\ldots,x_{n-1}),x_{n}) \enspace.
\end{split}
\end{equation}
The second equation follows from
\[
\argmax(x_0,\ldots,x_{n-1},x_{n})=
\begin{cases}
\argmax(x_0,\ldots,x_{n-1}) & \mbox{if $\max(x_0,\ldots,x_{n-1}) \ge x_{n}$,} \\
n & \mbox{if $\max(x_0,\ldots,x_{n-1}) < x_{n}$.}
\end{cases}
\]

These formulae yield a (in general, not minimal) polynomial expression of $\argmax^{(r)}(x)$ from
 those of $\argmax^{(0)}(x)$ with $n = 2$ and $\max(x)$.
\subsection{The case $p = 2$}
\label{subsec:argmax__p=2}
When $p=2$, we can derive a minimal polynomial expression of $\argmax^{(r)}$.
\begin{prop}
\label{prop:argmax_p=2_first}
When $p=2$ and $k,r \ge 0$, the following are minimal polynomial expressions:
\begin{eqnarray*}
\argmax^{(r)}(x_0, x_1,\ldots,x_{(2k+2)2^r-1})&=& \argmax^{(r)}(x_0, x_1,\ldots,x_{(2k+1)2^r-1})\\
&=&
\sum_{i=0}^k (1 + x_0)(1 + x_1)\cdots(1 + x_{(2i+1)2^r-1})  \left(\prod_{j=(2i+1)2^r}^{(2i+2)2^r-1} (1+x_{j}) -1\right)\\
&=&
\sum_{i=1}^{2k+2} (1 + x_0)(1 + x_1)\cdots(1 + x_{i \cdot 2^r-1}) \enspace, \\
\argmax^{(r)}(x_0,x_1,\ldots,x_{n-1}) 
&=& \argmax^{(r)}(x_0,x_1,\ldots,x_{n-1},0) \enspace.
\end{eqnarray*}
\end{prop}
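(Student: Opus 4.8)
The plan is to reduce the whole proposition to the base case $r=0$ and then lift that case through the block decomposition \eqref{eq:argmax_recurrence}. For $p=2$, Corollary~\ref{psi-p-2} tells us that the maximum over a block of $2^r$ consecutive entries is $z_i := \max(x_{i\cdot 2^r},\ldots,x_{(i+1)2^r-1}) = \prod_{j=i\cdot 2^r}^{(i+1)2^r-1}(1+x_j)-1$, so that $1+z_i = \prod_{j=i\cdot 2^r}^{(i+1)2^r-1}(1+x_j)$, and \eqref{eq:argmax_recurrence} reads $\argmax^{(r)}(x)=\argmax^{(0)}(z_0,z_1,\ldots)$. Thus, once the two $r=0$ expressions are established, substituting $1+z_l=\prod_{j=l\cdot 2^r}^{(l+1)2^r-1}(1+x_j)$ collapses each product $\prod_{l=0}^{i-1}(1+z_l)$ into $\prod_{j=0}^{i\cdot 2^r-1}(1+x_j)$ and converts the two base-case formulae verbatim into the two displayed general formulae. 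Because distinct blocks involve disjoint variables, every term remains multilinear in the $x_j$; hence the lifted polynomials still meet the degree bound $p-1=1$ and are minimal.

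The heart of the matter is therefore the $r=0$ identity $\argmax^{(0)}(x_0,\ldots,x_{2k+1})=\sum_{i=1}^{2k+2}\prod_{l=0}^{i-1}(1+x_l)$, which I would prove by evaluating both sides at an arbitrary point of $\F_2{}^{2k+2}$ and invoking minimality. The mechanism is that $\prod_{l=0}^{i-1}(1+x_l)=1$ precisely when $x_0=\cdots=x_{i-1}=0$; writing $j^\ast$ for the index of the first nonzero coordinate (so $\argmax(x)=j^\ast$, and $j^\ast=0$ when $x=0$), the right-hand side counts modulo $2$ those $i\in\{1,\ldots,2k+2\}$ with $i\le j^\ast$. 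This parity equals $j^\ast \bmod 2=\argmax^{(0)}(x)$ in the generic case and equals $2k+2\equiv 0$ on the zero vector, matching $\argmax^{(0)}(0)=0$. Minimality (multilinearity) then upgrades this pointwise agreement to an identity of minimal polynomials. The coincidence of the two base-case forms is a purely algebraic telescoping in $\F_2[x]$: using $x_{2i+1}=(1+x_{2i+1})-1$ and $-1=1$, the partial products $C_m:=\prod_{l=0}^m(1+x_l)$ give $\sum_{i=0}^k\bigl(C_{2i+1}+C_{2i}\bigr)=\sum_{m=0}^{2k+1}C_m$, which is the second form.

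For the two remaining identities I argue at the level of $\argmax$ itself. The trailing-zero invariance $\argmax^{(r)}(x_0,\ldots,x_{n-1})=\argmax^{(r)}(x_0,\ldots,x_{n-1},0)$ comes from the case analysis accompanying \eqref{eq:argmax_recursive}: appending $x_n=0$, we always have $\max(x_0,\ldots,x_{n-1})\ge 0=x_n$, so $\argmax(x_0,\ldots,x_{n-1},0)=\argmax(x_0,\ldots,x_{n-1})$, and every digit of its $p$-ary expansion agrees as well. The equality relating the two input lengths expresses that $\argmax^{(r)}$ is unchanged upon adjoining or deleting a full block of width $2^r$ whose block-index has $r$-th digit $0$; the same parity reading makes this transparent, since the $r$-th digit of $\argmax$ equals the parity of the block-index of the first width-$2^r$ block containing a nonzero entry, and such a block can only create inputs whose first nonzero block lies at an even position, where the digit coincides with the all-zero default value $0$.

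The step I expect to be the main obstacle is the index bookkeeping: carrying out the telescoping with the exact block boundaries $(2i+1)2^r$ and $(2i+2)2^r$, and, in the length-invariance step, pinning down precisely which block may be added or dropped and verifying that its position contributes $0$ to the relevant parity. Once the single evaluation computation is in place, everything else is multilinearity together with the algebraic identities above.
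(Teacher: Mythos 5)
Most of your argument tracks the paper's own proof: the reduction of general $r$ to $r=0$ via the block decomposition \eqref{eq:argmax_recurrence} together with Corollary \ref{psi-p-2}, the pointwise verification of the $r=0$ formula by locating the first nonzero coordinate, the telescoping identity linking the two displayed forms, multilinearity as the certificate of minimality for $p=2$, and the use of $\argmax(x,0)=\argmax(x)$ for the final equality are all sound and essentially the same in substance as the paper's proof (the paper phrases the base case as ``$\argmax^{(0)}(x)=1$ iff the first nonzero index is odd'' and writes down $\sum_{i=0}^k(1+x_0)\cdots(1+x_{2i})x_{2i+1}$ directly, whereas you verify the telescoped sum pointwise; this is a cosmetic difference).

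The genuine gap is your treatment of the first equality, between input lengths $(2k+2)2^r$ and $(2k+1)2^r$. You justify it by asserting that the block being adjoined or deleted ``can only create inputs whose first nonzero block lies at an even position.'' That is false for the stated lengths: the block in question is block number $2k+1$, an \emph{odd} position, so adjoining it does change the function. Concretely, for $k=0$, $r=0$ the claimed equality reads $\argmax^{(0)}(x_0,x_1)=\argmax^{(0)}(x_0)$; the left side is $(1+x_0)x_1$ while the right side is identically $0$, and they differ at $(x_0,x_1)=(0,1)$. What your parity mechanism actually proves --- and what the paper's proof actually establishes, since its base case is $\argmax^{(0)}(x_0,\ldots,x_{2k+2})=\argmax^{(0)}(x_0,\ldots,x_{2k+1})$, i.e.\ $2k+3$ versus $2k+2$ inputs --- is the equality between lengths $(2k+3)2^r$ and $(2k+2)2^r$: there the extra block is block number $2k+2$, an even position, so a maximum appearing first in that block contributes digit $0$, matching the shorter input. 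In other words, the first line of the Proposition as printed contains an off-by-one slip; a blind proof has to either prove the corrected pairing of lengths or flag the discrepancy, whereas you asserted a parity fact that fails for the stated block index, so this step of your argument does not go through as written.
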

\begin{proof}
Notice that $\argmax^{(0)}(x) = 1$ if and only if there is an odd index $i$ satisfying that $x_j = 0$ for every $j < i$ and $x_i = 1$.
So we have
\[
\argmax^{(0)}(x_0, x_1,\ldots,x_{2k+2})
=\argmax^{(0)} (x_0,x_1,\ldots ,x_{2k+1})
=\sum_{i=0}^k (1 + x_0)(1 + x_1)\cdots(1 + x_{2i})x_{2i+1} \enspace.
\]
Combining this with \eqref{eq:argmax_recurrence} and Proposition \ref{psi-p-2}, we obtain the first formula
(note that the characteristic is now $p = 2$).
The second formula follows from the fact $\argmax(x,0) = \argmax(x)$.
\end{proof}

We can also use \eqref{eq:argmax_recursive} to give another formula:
\begin{prop}
\label{prop:argmax_p=2_second}
When $p = 2$ and $r \geq 0$,
a minimal polynomial expression of $\argmax^{(r)}(x)$ is given as follows:\\
$(1)$ $\argmax^{(r)}(x_0) = 0$, \\
$(2)$ If $2^{r+1}k+2^r \le n < 2^{r+1}(k+1)$ for an integer $k \ge 0$ (i.e., $n^{(r)} = 1$), then we have
\[
\argmax^{(r)}(x_0,x_1,\ldots,x_n)
= \sum_{j=0}^{k}\sum_{i=0}^{\min (2^r-1,n-2^{r+1}j-2^r)}
(1+x_0)(1+x_1)\cdots (1+x_{2^{r+1}j+2^r+i-1}) x_{2^{r+1}j+2^r+i} \enspace,
\]
$(3)$ If $2^{r+1}k \le n < 2^{r+1}k+2^r$ for an integer $k \ge 0$ (i.e., $n^{(r)} = 0$), then we have
\[
\argmax^{(r)}(x_0,x_1,\ldots,x_n) = \argmax^{(r)}(x_0,x_1,\ldots,x_{n-1}) \enspace.
\]
\end{prop}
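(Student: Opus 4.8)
The plan is to prove all three parts at once by induction on the input length, using the recursion \eqref{eq:argmax_recursive} as the engine. It is convenient to introduce the single polynomial
\[
S_n = \sum_{\substack{0 \le M \le n \\ M^{(r)} = 1}} (1+x_0)(1+x_1)\cdots(1+x_{M-1})\, x_M \enspace,
\]
the sum ranging over those positions $M \le n$ whose $r$-th binary digit equals $1$. A routine reindexing, writing $M = 2^{r+1}j + 2^r + i$ with $0 \le j$ and $0 \le i \le 2^r - 1$, shows that $S_n$ is precisely the double sum displayed in part $(2)$ when $n^{(r)} = 1$, and that $S_n = S_{n-1}$ exactly when $n^{(r)} = 0$. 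Hence it suffices to prove the single identity $\argmax^{(r)}(x_0,\ldots,x_n) = S_n$ for every $n \ge 0$; specializing to $n^{(r)} = 0$ and $n^{(r)} = 1$ then recovers parts $(3)$ and $(2)$, respectively. For the base case $n = 0$ one has $0^{(r)} = 0$, so $S_0$ is the empty sum, while $\argmax(x_0) = 0$ forces $\argmax^{(r)}(x_0) = 0$; this is part $(1)$.

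Next I would identify the multiplier appearing in \eqref{eq:argmax_recursive}. Set $A = \argmax^{(0)}(\max(x_0,\ldots,x_{n-1}),x_n)$. By definition $A = 1$ iff $\max(x_0,\ldots,x_{n-1}) < x_n$, which over $\F_2$ happens exactly when $x_0 = \cdots = x_{n-1} = 0$ and $x_n = 1$; combined with Corollary \ref{psi-p-2}, which gives $1 + \max(x_0,\ldots,x_{n-1}) = \prod_{i=0}^{n-1}(1+x_i)$, this identifies $A$ with the minimal polynomial $(1+x_0)\cdots(1+x_{n-1})\,x_n$.

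The crux of the argument, and the step I expect to be the main obstacle, is the observation that the bilinear-looking first term of \eqref{eq:argmax_recursive} collapses: $\argmax^{(r)}(x_0,\ldots,x_{n-1}) \cdot A = 0$. Indeed $A$ is nonzero only on the single input with $x_0 = \cdots = x_{n-1} = 0$, and there $\argmax(x_0,\ldots,x_{n-1}) = 0$, so the first factor vanishes; equivalently, every monomial of $\argmax^{(r)}(x_0,\ldots,x_{n-1})$ carries a factor $x_M$ with $M \le n-1$, and the matching factor $(1+x_M)$ inside $A$ kills it modulo $x_M^2 = x_M$. This reduces \eqref{eq:argmax_recursive} to the purely additive recursion $\argmax^{(r)}(x_0,\ldots,x_n) = \argmax^{(r)}(x_0,\ldots,x_{n-1}) + n^{(r)} A$. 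Feeding in the induction hypothesis $\argmax^{(r)}(x_0,\ldots,x_{n-1}) = S_{n-1}$ then yields $S_{n-1} = S_n$ when $n^{(r)} = 0$, and $S_{n-1} + (1+x_0)\cdots(1+x_{n-1})x_n = S_n$ when $n^{(r)} = 1$, closing the induction.

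It remains to check minimality, which is immediate: each monomial of $S_n$ is a product of distinct factors $(1+x_i)$ times a single $x_M$ whose index exceeds all the indices $i$ involved, hence is multilinear, and a sum of multilinear polynomials is again multilinear, so every variable occurs to degree at most $p-1 = 1$. I anticipate that the reindexing is pure bookkeeping and minimality is automatic, so the only genuine content lies in the collapse identity above, which is what makes the recursion \eqref{eq:argmax_recursive} degree-preserving in characteristic two.
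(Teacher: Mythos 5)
Your proof is correct and takes essentially the same route as the paper's: both hinge on the collapse identity $\argmax^{(r)}(x_0,\ldots,x_{n-1}) \cdot \argmax^{(0)}(\max(x_0,\ldots,x_{n-1}),x_n) = 0$ in characteristic $2$, which reduces \eqref{eq:argmax_recursive} to a purely additive recursion, and then verify by induction on $n$ that the claimed formula satisfies that recursion. Your unified sum $S_n$ over indices $M$ with $M^{(r)}=1$ simply packages the paper's sub-case analysis (whether or not $n = 2^{r+1}k + 2^r$) into a single reindexing step, and your explicit multilinearity remark makes the minimality claim, left implicit in the paper, fully visible.
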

\begin{proof}
Let $f(x) = f(x_0,\ldots,x_n)$ denote the right-hand side of the claimed equality in the statement (we define $f(x) = 0$ when $n = 0$).
First we note that, in the present case $p = 2$ we have
\[
\argmax^{(r)}(x_0,\ldots,x_{n-1}) \argmax^{(0)}(\max(x_0,\ldots,x_{n-1}),x_n) = 0 \enspace,
\]
since the only possibility to satisfy $\argmax^{(0)}(\max(x_0,\ldots,x_{n-1}),x_n) = 1$ is that $x_i = 0$ for every $i < n$ and $x_n = 1$, which then implies $\argmax(x_0,\ldots,x_{n-1}) = 0$.
Therefore, the recursive formula \eqref{eq:argmax_recursive} now becomes
\[
\argmax^{(r)}(x_0,\ldots,x_{n-1},x_n)
= \argmax^{(r)}(x_0,\ldots,x_{n-1}) + n^{(r)} \cdot \argmax^{(0)}(\max(x_0,\ldots,x_{n-1}),x_n) \enspace.
\]
It then suffices to show that $f(x)$ instead of $\argmax^{(r)}(x)$ also satisfies the same recursive formula.
This is obvious when $n$ satisfies the condition for the second case in the statement.

From now on, we focus on the other case where $n$ satisfies the condition for the first case in the statement.
Since direct computation shows $\argmax^{(0)}(x_0,x_1)=(x_0+1)x_1$,
by Proposition \ref{psi-p-2} we have
\[
\argmax^{(0)}(\max(x_0,\ldots,x_{n-1}),x_n)
= (\max(x_0,\ldots,x_{n-1}) + 1) x_n
= (1 + x_0)(1 + x_1) \cdots (1 + x_{n-1}) x_n \enspace,
\]
therefore the recursive formula now becomes
\[
\argmax^{(r)}(x_0,\ldots,x_{n-1},x_n)
= \argmax^{(r)}(x_0,\ldots,x_{n-1}) + (1 + x_0)(1 + x_1) \cdots (1 + x_{n-1}) x_n \enspace.
\]
If $n \neq 2^{r+1}k + 2^r$, then $n-1$ also satisfies the same condition as $n$ with the same integer $k$, and now we indeed have $f(x_0,\ldots,x_{n-1},x_n) = f(x_0,\ldots,x_{n-1}) + (1 + x_0)(1 + x_1) \cdots (1 + x_{n-1}) x_n$ by the definition of $f$ (note that $\min (2^r-1,n-2^{r+1}k-2^r) = n-2^{r+1}k-2^r$ in this case).
On the other hand, if $n = 2^{r+1}k + 2^r$, then we have $f(x_0,\ldots,x_{n-1}) = f(x_0,\ldots,x_{2^{r+1}k - 1})$ by the definition of $f$ for the second case in the statement, while $2^{r+1}k - 1$ satisfies the condition for the first case in the statement with $k-1$ playing the role of $k$.
This implies that $f(x_0,\ldots,x_{n-1},x_n) = f(x_0,\ldots,x_{n-1}) + (1 + x_0)(1 + x_1) \cdots (1 + x_{n-1}) x_n$ also holds in this case by the definition of $f$ (note that now $\min (2^r-1,n-2^{r+1}k-2^r) = n-2^{r+1}k-2^r = 0$).
Hence $f$ satisfies the desired recursive formula in any case, completing the proof.
\end{proof}

For Proposition \ref{prop:argmax_p=2_second}, by noting that $x_{2^{r+1}j + 2^r + i} = (1 + x_{2^{r+1}j + 2^r + i}) - 1$ and now the characteristic is $p = 2$, the formula given there can be rewritten as follows.
\begin{cor}
Let $S(r,n)$ be the set of integers defined by
\[
\begin{split}
S(r,n) &:= \{ 2^{r+1}k +2^r -1\; | \; 0\leq k < 2^{-r-1}(n+1-2^r) \} \\
&\quad \cup \{ 2^{r+1}k -1 \; | \; 1\leq k < 2^{-r-1}(n+1-2^r) \} \\
&\quad \cup \{ \min (n, 2^{r+1}(\lfloor (n-2^r)/2^{r+1} \rfloor +1)-1) \} \enspace.
\end{split}
\]
A minimal polynomial expression of $\argmax^{(r)}$ is given by
\[
\argmax^{(r)} (x_0,x_1,\ldots,x_n) = \sum_{i \in S(r,n)} (1+x_0)(1+x_1)\cdots (1+x_i) \enspace.
\]
\end{cor}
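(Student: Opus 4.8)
The claim is that the formula of Proposition \ref{prop:argmax_p=2_second} can be repackaged as a single sum over the index set $S(r,n)$. Since Proposition \ref{prop:argmax_p=2_second} is already established, my strategy is purely bookkeeping: I want to show that the double sum there, after the substitution $x_m = (1+x_m)-1$ and using $\mathrm{char}\,\F_2 = 2$, collapses exactly to $\sum_{i \in S(r,n)} (1+x_0)\cdots(1+x_i)$. So no new function-theoretic content is needed; the whole proof is a rewriting of one polynomial identity into another.

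First I would fix notation: write $\Pi_i := (1+x_0)(1+x_1)\cdots(1+x_i)$, so each summand in the target is just $\Pi_i$. In Proposition \ref{prop:argmax_p=2_second}(2), a generic term is
\[
(1+x_0)\cdots(1+x_{2^{r+1}j+2^r+i-1})\, x_{2^{r+1}j+2^r+i} = \Pi_{m-1}\, x_m,
\]
where $m := 2^{r+1}j + 2^r + i$. Applying $x_m = (1+x_m) - 1$ turns this into $\Pi_{m-1}(1+x_m) - \Pi_{m-1} = \Pi_m - \Pi_{m-1}$. Thus every original summand becomes a telescoping difference $\Pi_m - \Pi_{m-1}$, and since we are in characteristic $2$ the minus signs are irrelevant; the whole expression is a sum of terms $\Pi_m$ and $\Pi_{m-1}$, where $m$ ranges over $\{2^{r+1}j + 2^r + i\}$ for $0 \le j \le k$ and $0 \le i \le \min(2^r - 1, n - 2^{r+1}j - 2^r)$.

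Next I would collect the indices that survive. For a fixed block $j$, the values $m$ give upper endpoints $m = 2^{r+1}j + 2^r + i$ and the $\Pi_{m-1}$ give lower endpoints $m-1 = 2^{r+1}j + 2^r + i - 1$. The cleanest route is to observe that within each block the $i$-index runs over a contiguous range, so the $\Pi_{m-1}$ terms from consecutive $i$ telescope against the $\Pi_m$ terms, leaving only the two extreme indices of each block: the smallest lower endpoint $2^{r+1}j + 2^r - 1$ and the largest upper endpoint. For a full block ($j < k$, or $j = k$ when the $\min$ equals $2^r - 1$) the largest upper endpoint is $2^{r+1}(j+1) - 1 = 2^{r+1}(j+1) - 1$, which can be reindexed as a lower endpoint $2^{r+1}k' - 1$ of the next block. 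This is precisely the origin of the two families in $S(r,n)$: the terms $2^{r+1}k + 2^r - 1$ (the per-block lower endpoints) and the terms $2^{r+1}k - 1$ (the block boundaries). The final singleton $\{\min(n, 2^{r+1}(\lfloor (n-2^r)/2^{r+1}\rfloor + 1) - 1)\}$ in $S(r,n)$ accounts for the top of the last, possibly truncated block, where the upper limit is governed by $\min(2^r - 1, n - 2^{r+1}k - 2^r)$ rather than by $2^r - 1$.

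The main obstacle I anticipate is the careful verification that the telescoping is clean — i.e.\ that no unintended cancellation or survival occurs at the block boundaries, and that the truncation in the last block (the $\min$) produces exactly the stated singleton rather than an index from the first family. I would handle this by checking the two cases separately: when the last block is full ($n - 2^{r+1}k - 2^r \ge 2^r - 1$, so the $\min$ is $2^r - 1$) the singleton coincides with a boundary term $2^{r+1}(k+1) - 1$ and merges with the second family; when it is truncated the singleton is genuinely $n$ or the truncated endpoint. A small amount of care with the inequality defining the range of $k$ in $S(r,n)$, namely $0 \le k < 2^{-r-1}(n+1-2^r)$, confirms that these endpoints match the ranges $0 \le j \le k$ of Proposition \ref{prop:argmax_p=2_second}. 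Once the index matching is pinned down, the identity follows and, since Proposition \ref{prop:argmax_p=2_second} already guarantees the degree-minimality, the resulting expression is automatically the minimal polynomial.
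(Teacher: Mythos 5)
Your proposal is correct and takes essentially the same route as the paper: the paper justifies this corollary precisely by substituting $x_{2^{r+1}j+2^r+i} = (1+x_{2^{r+1}j+2^r+i})-1$ into Proposition \ref{prop:argmax_p=2_second} and using that the characteristic is $2$, leaving the block-by-block telescoping and the matching of endpoints with the three parts of $S(r,n)$ (which you spell out) implicit. Nothing further is needed.
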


\subsection{The case $p = 3$}
\label{subsec:argmax__p=3}
When $p=3$, a direct computation shows
\[
\argmax^{(0)}(x_0,x_1)= x_1(1+x_0)(x_1-x_0).
\]
Combining this with the following formula from Proposition \ref{psi-p-3}
\[
\max(x_0,\ldots,x_n)= \prod_{i=0}^{n} (1+x_i) \cdot \Big( 1+\sum_{i \ge 1} e_{2i}(x_0,\ldots,x_n) \Big) -1 \enspace,
\]
we obtain by \eqref{eq:argmax_recursive}:
\[
\begin{split}
&\argmax^{(r)}(x_0,\ldots,x_n,x_{n+1}) \\
&= \argmax^{(r)}(x_0,\ldots,x_n) \left( 1-x_{n+1} \prod_{i=0}^{n} (1+x_i) \cdot \Bigl( 1+\sum_{i \ge 1} e_{2i}(x_0,\ldots,x_n) \Bigr) \right. \\
&\hspace*{140pt} \left. \cdot \biggl( 1+x_{n+1} - \prod_{i=0}^{n} (1+x_i) \cdot \Bigl( 1+\sum_{i \ge 1} e_{2i}(x_0,\ldots,x_n) \Bigr) \biggr) \right) \\
&\quad + (n+1)^{(r)} \cdot \left( x_{n+1} \prod_{i=0}^{n} (1+x_i) \cdot \Bigl( 1+\sum_{i \ge 1} e_{2i}(x_0,\ldots,x_n) \Bigr) \right. \\
&\hspace*{80pt} \left. \cdot \biggl( 1+x_{n+1} - \prod_{i=0}^{n} (1+x_i) \cdot \Bigl( 1+\sum_{i \ge 1} e_{2i}(x_0,\ldots,x_n) \Bigr) \biggr) \right) \enspace.
\end{split}
\]
Although this formula does not yield a minimal polynomial expression for $\argmax^{(r)}$ directly, 
we can still compute one at least when $n$ is not too large.
For example, when the input vector has length $3$ (hence it suffices to consider $r = 0$ only), the formula above (with $r = 0$ and $n = 1$) becomes
\[
\begin{split}
&\argmax^{(0)}(x_0,x_1,x_2) \\
&= \argmax^{(0)}(x_0,x_1) \biggl( 1-x_2 (1+x_0)(1+x_1)(1+x_0x_1) \Bigl( 1+x_2 - (1+x_0)(1+x_1)(1+x_0x_1) \Bigr) \biggr) \\
&\quad + 2 \cdot \biggl( x_2 (1+x_0)(1+x_1)(1+x_0x_1) \Bigl( 1+x_2 - (1+x_0)(1+x_1)(1+x_0x_1) \Bigr) \biggr) \\
&= x_1(1+x_0)(x_1-x_0) \\
&\quad - \Bigl( x_1(1+x_0)(x_1-x_0) + 1 \Bigr) \biggl( x_2 (1+x_0)(1+x_1)(1+x_0x_1) \Bigl( 1+x_2 - (1+x_0)(1+x_1)(1+x_0x_1) \Bigr) \biggr) \enspace.
\end{split}
\]
A straightforward expansion of the polynomial in the right-hand side yields
\[
\begin{split}
&\argmax^{(0)}(x_0,x_1,x_2) 
= - x_0x_1 - x_0^2x_1 + x_1^2 + x_0x_1^2 \\
&\ + x_2 \bigl( x_0 + x_0^2 + x_1 + x_0x_1 + x_0^2x_1 - x_0^4x_1 + x_1^2 - x_0x_1^2 + x_0^3x_1^2 + x_0^5x_1^2 + x_1^3 - x_0^3x_1^3 + x_0^4x_1^3 - x_0^5x_1^3 - x_0^6x_1^3 \\
&\quad + x_1^4 + x_0^2x_1^4 - x_0^4x_1^4 - x_0^5x_1^4 + x_0^6x_1^4 - x_0x_1^5 - x_0^2x_1^5 - x_0^3x_1^5 - x_0^4x_1^5 - x_0^5x_1^5 - x_0^6x_1^5 + x_0^2x_1^6 + x_0^5x_1^6 \bigr) \\
&\ + x_2^2 \bigl({} - 1 - x_0 - x_1 - x_0x_1 + x_0^2x_1 + x_0^3x_1 - x_1^2 + x_0x_1^2 + x_0^2x_1^2 + x_0^4x_1^2 \\
&\quad - x_1^3 + x_0^2x_1^3 + x_0^3x_1^3 + x_0^4x_1^3 - x_0x_1^4 + x_0^2x_1^4 - x_0^3x_1^4 \bigr)
\end{split}
\]
and, by applying the relations $x_0^3 \equiv x_0$ and $x_1^3 \equiv x_1$ several times (where $\equiv$ means equivalence as functions over $\F_p$), we finally obtain:
\begin{prop}\label{prop:argmax_p=3_n=3}
A minimal polynomial expression of $\argmax^{(0)}(x_0,x_1,x_2)$ for $p = 3$ is given by
\[
\begin{split}
\argmax^{(0)}(x_0,x_1,x_2) 
&=2(x_0x_1^2x_2+x_1^2x_2^2+x_1^2x_2+2x_1x_2^2+x_0x_1+2x_0x_2+2x_1^2+x_1x_2+x_2^2)(x_0+1) \enspace.
\end{split}
\]
\end{prop}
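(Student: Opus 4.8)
The plan is to exploit the uniqueness of the minimal polynomial, which reduces the proof to two independent checks: that the claimed polynomial $P(x_0,x_1,x_2)$ on the right-hand side satisfies the degree condition, and that it agrees with $\argmax^{(0)}$ at every point of $\F_3{}^3$. Since the input length is $3$ and $p=3$, the value $\argmax(x)$ lies in $\{0,1,2\}$ and hence coincides with its own $0$-th $3$-ary digit, so $\argmax^{(0)}(x)=\argmax(x)$ throughout; this is the function that $P$ must reproduce.

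For the degree condition I would simply inspect the displayed product. The nine-term factor has degree at most $1$ in $x_0$ and at most $2$ in each of $x_1,x_2$, and multiplying by $(x_0+1)$ raises only the $x_0$-degree, to at most $2=p-1$. Hence every monomial of $P$ already has degree at most $p-1$ in each variable, no Fermat reduction is needed, and $P$ is automatically the minimal polynomial of whatever function it computes.

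The substantive step is the pointwise verification. Rather than running through all $27$ inputs blindly, I would partition $\F_3{}^3$ by the value of $\argmax$, using the comparison description that $\argmax(x)=0$ exactly when $x_0=\max(x)$, that $\argmax(x)=1$ exactly when $x_0<x_1$ and $x_1\ge x_2$, and that $\argmax(x)=2$ exactly when $x_0<x_2$ and $x_1<x_2$. On the first region $P$ must vanish; here the factor $(x_0+1)$ already kills the case $x_0=2$ (where $x_0$ is the maximal field element), and for $x_0\in\{0,1\}$ one checks that the nine-term factor reduces to $0$ modulo $3$. On the second and third regions one verifies that $P$ collapses to the constants $1$ and $2$ respectively. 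This bookkeeping is the main obstacle: it is finite but somewhat delicate, and the cleanest way to tame it is to substitute each admissible value of $x_0$ and read off the resulting two-variable polynomial in $x_1,x_2$, comparing it against the already-established formula $\argmax^{(0)}(x_0,x_1)=x_1(1+x_0)(x_1-x_0)$ and the case structure of $\max$.

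Alternatively, and this is how the expression is most naturally discovered, one can derive $P$ directly from the recursive formula \eqref{eq:argmax_recursive} with $r=0$ and $n=1$, feeding in the two-variable expression above together with the three-variable $\max$ supplied by Proposition \ref{psi-p-3}. This is the route carried out in the computation preceding the statement; its only difficulty is the bulk of the intermediate expansion and the repeated application of $x_0^3\equiv x_0$ and $x_1^3\equiv x_1$ to bring each variable back to degree at most $2$. I would present the uniqueness-based verification as the actual proof and treat the recursive computation as the motivation, since the former avoids ever manipulating the large intermediate polynomial.
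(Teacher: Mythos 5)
Your proof is correct, but it takes a genuinely different route from the paper's. The paper has no separate verification step: its proof of Proposition \ref{prop:argmax_p=3_n=3} \emph{is} the computation displayed immediately before the statement --- the formula is derived from the recursion \eqref{eq:argmax_recursive} (with $r=0$, $n=1$), using $\argmax^{(0)}(x_0,x_1)=x_1(1+x_0)(x_1-x_0)$ and the expression of $\max$ from Proposition \ref{psi-p-3}, then expanded by brute force and reduced by repeated use of $x_0^3\equiv x_0$ and $x_1^3\equiv x_1$, minimality being automatic once every variable has degree at most $2$. You instead invoke uniqueness of the minimal polynomial and verify the claimed polynomial directly: the degree bound is read off the factored form, $\argmax^{(0)}=\argmax$ because the output lies in $\{0,1,2\}$, and agreement on all $27$ points of $\F_3{}^3$ is checked region by region according to the value of $\argmax$ (your three region descriptions are correct, and the claimed collapses do hold). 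This is precisely the proof style the paper itself uses for Proposition \ref{psi-p-3}, so it is well precedented; what it buys is a proof that never manipulates the bulky intermediate expansion, every step being a finite, mechanical evaluation. What the paper's derivation buys is provenance: it shows how one \emph{finds} the polynomial, and it exercises the general machinery (recursion plus Fermat reduction) that applies beyond this small case. One caution for the write-up: your remark that the nine-term factor vanishes for $x_0\in\{0,1\}$ holds only on the region where $x_0=\max(x)$, not identically in $(x_1,x_2)$ (e.g.\ at $(x_0,x_1,x_2)=(0,1,0)$ that factor equals $2$); in context your phrasing is restricted to the first region and hence fine, but it should be kept explicit to avoid misreading.
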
 
 
\section{Polynomial expressions of the $\max$ and $\argmax$ functions for two variables}
\label{sec:two-variables}
 First we note that
$x_0<x_1$ if and only if $\overline{x_0}$ and $x_1$ add up to an equal or greater integer than $p$ when
considered as integers;
that is, $\argmax^{(0)}(x_0,x_1)$ is equal to the carry by the $p$-ary addition of two single-digit values $\overline{x_0}$ and $x_1$
to the next digit.
A minimal polynomial expression of this carry function, denoted by $\varphi_1$, has been determined in \cite{carry,NK15}:
\begin{lem}
[{\cite{carry,NK15}}]
\label{lem:carry}
For $y_0,y_1 \in \F_p$, we have
\[
\varphi_1(y_0,y_1)
= \sum_{d=1}^{p-1} (-1)^{d} d^{-1} y_0 (y_0-1) \cdots (y_0-d+1) y_1 (y_1-1) \cdots (y_1-(p-d)+1) \enspace,
\]
where the $d^{-1}$ in the right-hand side means the inverse of $d$ as an element of $\F_p$.
\end{lem}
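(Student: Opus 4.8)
The plan is to reduce the identity to a clean binomial-coefficient computation over $\F_p$. First I observe that, as a polynomial, each variable occurs with degree at most $p-1$ in the claimed expression: in the $d$-th summand the factor $y_0(y_0-1)\cdots(y_0-d+1)$ has degree $d \le p-1$ in $y_0$ and the factor $y_1(y_1-1)\cdots(y_1-(p-d)+1)$ has degree $p-d \le p-1$ in $y_1$. Hence the right-hand side already satisfies the minimality condition, and it remains only to check that it agrees with $\varphi_1$ as a function, i.e.\ that it equals $\chi(y_0+y_1\ge p)$ for every $(y_0,y_1)\in\{0,1,\dots,p-1\}^2$.

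Next I would rewrite the falling factorials using binomial coefficients. Over $\F_p$ we have $y_0(y_0-1)\cdots(y_0-d+1) = d!\binom{y_0}{d}$ and $y_1(y_1-1)\cdots(y_1-(p-d)+1) = (p-d)!\binom{y_1}{p-d}$, so the $d$-th summand becomes $(-1)^d d^{-1} d!\,(p-d)!\binom{y_0}{d}\binom{y_1}{p-d}$. The key simplification is that this scalar coefficient is identically $1$ in $\F_p$: since $d^{-1}d! = (d-1)!$ in $\F_p$ and, by Wilson's theorem together with the standard congruence $\binom{p-1}{k}\equiv(-1)^k \pmod p$, one obtains $(d-1)!\,(p-d)!\equiv(-1)^d\pmod p$, whence $(-1)^d d^{-1} d!\,(p-d)!\equiv 1$. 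The whole sum therefore collapses to $\sum_{d=1}^{p-1}\binom{y_0}{d}\binom{y_1}{p-d}$.

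To finish I would extend the range of summation to $0\le d\le p$, which adds only the terms $\binom{y_1}{p}$ and $\binom{y_0}{p}$, both zero because $y_0,y_1<p$. Vandermonde's identity then gives $\sum_{d=0}^{p}\binom{y_0}{d}\binom{y_1}{p-d}=\binom{y_0+y_1}{p}$. Finally, since $0\le y_0+y_1\le 2p-2$, writing $y_0+y_1=\varepsilon p + s$ with $\varepsilon\in\{0,1\}$ and $0\le s<p$, Lucas' theorem yields $\binom{y_0+y_1}{p}\equiv\varepsilon\pmod p$, and $\varepsilon=1$ precisely when $y_0+y_1\ge p$, that is, exactly when there is a carry. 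This is the desired equality $\varphi_1(y_0,y_1)=\chi(y_0+y_1\ge p)$.

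The main obstacle is the coefficient identity $(-1)^d d^{-1} d!\,(p-d)!\equiv 1\pmod p$; everything else (Vandermonde, Lucas) is routine once the summand has been normalized. A useful cross-check before invoking Vandermonde is the observation that $\binom{y_0}{d}\binom{y_1}{p-d}\ne 0$ forces $p-y_1\le d\le y_0$, so the surviving range is nonempty exactly when $y_0+y_1\ge p$; this settles the no-carry case directly and explains why the answer is supported precisely on the carry configurations.
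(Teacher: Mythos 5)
Your proof is correct, but there is nothing in the paper to compare it against: the paper does not prove Lemma \ref{lem:carry} at all, it simply imports the statement from \cite{carry,NK15}. Your argument therefore does something the paper chose not to do, namely give a self-contained verification, and every step checks out. The degree bound (degree $d\le p-1$ in $y_0$ and $p-d\le p-1$ in $y_1$ in the $d$-th summand) legitimately reduces the claim to equality of functions, by uniqueness of the minimal polynomial; the coefficient normalization is right, since $d^{-1}d!=(d-1)!$ and $(d-1)!\,(p-d)!\equiv(-1)^d \pmod p$ (from $\binom{p-1}{d-1}\equiv(-1)^{d-1}$ and Wilson), so each summand is exactly $\binom{y_0}{d}\binom{y_1}{p-d}$ in $\F_p$; the added boundary terms $\binom{y_0}{p}$ and $\binom{y_1}{p}$ vanish because $y_0,y_1<p$; Vandermonde gives $\binom{y_0+y_1}{p}$; and Lucas' theorem extracts the carry digit since $y_0+y_1\le 2p-2$. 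One point worth making explicit in a write-up: the identity $y_0(y_0-1)\cdots(y_0-d+1)=d!\binom{y_0}{d}$ is an identity of integers, used with $y_0$ taken as the representative in $\{0,1,\dots,p-1\}$, and the result is then reduced mod $p$; this is exactly what ``equality as functions on $\F_p$'' means here, so the step is valid, but the change of viewpoint (from $\F_p$-polynomial to integer evaluation) deserves a sentence. Your final remark that the summand is nonzero only for $p-y_1\le d\le y_0$ is a nice sanity check: it disposes of the no-carry case without Vandermonde or Lucas, though those are still needed to see that the sum equals $1$ (rather than merely being nonzero in some summand) when $y_0+y_1\ge p$.
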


Combining this with $\argmax^{(0)}(x_0,x_1)=\varphi_1(\overline{x_0},x_1)$, we obtain:
\begin{prop}
\label{prop:argmax_n=2}
When $n=2$, a minimal polynomial expression of $\argmax^{(0)}(x_0,x_1)$ is given by
\[
\argmax^{(0)}(x_0,x_1)=\sum_{d=1}^{p-1} d^{-1} (x_0+1) (x_0+2) \cdots (x_0+d) x_1 (x_1-1) \cdots (x_1-(p-d)+1) \enspace.
\]
\end{prop}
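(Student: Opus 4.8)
The plan is to obtain the claimed formula by a direct substitution into the known minimal polynomial for the carry function $\varphi_1$ from Lemma~\ref{lem:carry}, exploiting the identity $\argmax^{(0)}(x_0,x_1)=\varphi_1(\overline{x_0},x_1)$ recorded just before the statement. Because this identity already reduces the problem to a computation, the only genuine content is the sign bookkeeping produced by the involution $\overline{x_0}=p-1-x_0$, together with the verification that the resulting polynomial meets the per-variable degree bound that characterises a minimal polynomial.

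First I would put $y_0=\overline{x_0}=p-1-x_0$ and $y_1=x_1$ in the formula of Lemma~\ref{lem:carry}. The factor depending on $y_1$ is left untouched and becomes $x_1(x_1-1)\cdots(x_1-(p-d)+1)$, exactly as in the target. For the factor depending on $y_0$, I would write out its $d$ terms after substitution,
\[
y_0(y_0-1)\cdots(y_0-d+1)=(p-1-x_0)(p-2-x_0)\cdots(p-d-x_0),
\]
and use $p\equiv 0$ in $\F_p$ to replace each factor $p-j-x_0$ by $-(x_0+j)$. Collecting the $d$ resulting signs gives
\[
y_0(y_0-1)\cdots(y_0-d+1)=(-1)^d(x_0+1)(x_0+2)\cdots(x_0+d).
\]

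Next I would observe that this $(-1)^d$ cancels precisely the factor $(-1)^d$ already present in Lemma~\ref{lem:carry}. After the cancellation the summand indexed by $d$ reads
\[
d^{-1}(x_0+1)(x_0+2)\cdots(x_0+d)\,x_1(x_1-1)\cdots(x_1-(p-d)+1),
\]
and summing over $d=1,\ldots,p-1$ reproduces the claimed expression as a function.

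Finally, to justify the word \emph{minimal}, I would check the degree condition. In the $d$-th summand the $x_0$-part has degree $d\le p-1$ and the $x_1$-part has degree $p-d\le p-1$, so in the whole sum each variable occurs with degree at most $p-1$. Since the displayed polynomial both represents $\argmax^{(0)}$ and satisfies this bound, uniqueness of the minimal polynomial forces it to be the minimal polynomial. I do not expect a serious obstacle; the single point requiring care is the sign count, namely that the substitution produces exactly $d$ factors of $-1$ so that the cancellation with the $(-1)^d$ of Lemma~\ref{lem:carry} is exact.
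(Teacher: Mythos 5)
Your proposal is correct and follows exactly the paper's route: the paper also obtains the formula by substituting $\overline{x_0}=p-1-x_0$ into the carry formula of Lemma~\ref{lem:carry} via the identity $\argmax^{(0)}(x_0,x_1)=\varphi_1(\overline{x_0},x_1)$, where each factor $y_0-j$ becomes $-(x_0+1+j)$ and the resulting $(-1)^d$ cancels the sign in the lemma. Your explicit sign bookkeeping and the per-variable degree check (degree $d$ in $x_0$ and $p-d$ in $x_1$, both at most $p-1$) are precisely the details the paper leaves implicit.
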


\begin{ex}
\label{ex:argmax_n=2}
By using Proposition \ref{prop:argmax_n=2} (or direct calculation), we have the following minimal polynomial expressions of $\argmax^{(0)}(x_0,x_1)$ for small primes.
\begin{itemize}
\item When $p=2$, $\argmax^{(0)}(x_0,x_1) = (x_0+1)x_1$.
\item When $p=3$, $\argmax^{(0)}(x_0,x_1) = -(x_0+1)(x_0-x_1)x_1$.
\item When $p=5$, $\argmax^{(0)}(x_0,x_1) = -(x_0+1)(x_0^2-x_0 x_1+x_0+x_1^2)(x_0-x_1)x_1$.
\item When $p=7$, $\argmax^{(0)}(x_0,x_1) =$ \\
$-(x_0^4+5 x_0^3 x_1+2 x_0^3+3 x_0^2 x_1^2+x_0^2 x_1+4 x_0^2+
5 x_0 x_1^3+6 x_0 x_1^2+3 x_0+x_1^4) (x_0+1) (x_0-x_1) x_1$.
\end{itemize}
\end{ex}

We also have the following relation between $\max$ and $\argmax$ deduced from their definitions:

\begin{lem}
\label{lem:relation_max_argmax}
We have $\max(x)=\sum_{i=0}^{n-1} x_i \cdot \chi(\argmax(x)=i)$.
In particular, we have
\[
\max(x_0,x_1)=x_0 \cdot (1-\argmax(x_0,x_1)) + x_1 \cdot \argmax(x_0,x_1) \enspace.
\]
\end{lem}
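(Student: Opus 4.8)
The plan is to derive the general identity directly from the definitions of $\argmax$ and of the truth function $\chi$, and then to specialise to $n=2$.

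First I would observe that the indicator $\chi(\argmax(x)=i)$ acts as a selector: since $\argmax(x)$ is by definition a single well-defined index in $\{0,1,\ldots,n-1\}$, exactly one of the $n$ values $\chi(\argmax(x)=0),\ldots,\chi(\argmax(x)=n-1)$ equals $1$ (namely the one with $i=\argmax(x)$) while all the others vanish. Consequently the sum $\sum_{i=0}^{n-1} x_i\,\chi(\argmax(x)=i)$ collapses to the single surviving term $x_{\argmax(x)}$. By the defining property of $\argmax$, namely that $x_{\argmax(x)}=\max(x)$, this yields the general identity at once.

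For the displayed special case $n=2$, the key point is that $\argmax(x_0,x_1)$ takes values only in $\{0,1\}$ and hence coincides with its own $0$-th $p$-ary digit, i.e.\ $\argmax(x_0,x_1)=\argmax^{(0)}(x_0,x_1)$. Writing $a=\argmax(x_0,x_1)\in\{0,1\}$, I would verify the two functional identities $\chi(\argmax(x)=0)=1-a$ and $\chi(\argmax(x)=1)=a$: when $a=0$ both sides of the first equal $1$ and both sides of the second equal $0$, and symmetrically when $a=1$. Substituting these into the general identity specialised to $n=2$ produces $\max(x_0,x_1)=x_0(1-a)+x_1 a$, which is exactly the claim.

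The argument is essentially a bookkeeping exercise, so there is no substantial obstacle; the only point requiring minor care is justifying that $\argmax$ is genuinely single-valued with image contained in $\{0,1,\ldots,n-1\}$, which is precisely what guarantees that the selector $\chi(\argmax(x)=i)$ isolates a unique term. In the $n=2$ reduction one must additionally note that the identification $\argmax=\argmax^{(0)}$ is legitimate exactly because the index range $\{0,1\}$ fits into a single $p$-ary digit for every prime $p$.
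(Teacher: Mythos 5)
Your proposal is correct and matches the paper's (implicit) argument: the paper states this lemma as following directly from the definitions and gives no separate proof, and your definitional unwinding — the indicator $\chi(\argmax(x)=i)$ selects the unique term $x_{\argmax(x)}=\max(x)$, and for $n=2$ one has $\chi(\argmax(x)=0)=1-\argmax(x_0,x_1)$ and $\chi(\argmax(x)=1)=\argmax(x_0,x_1)$ since $\argmax(x_0,x_1)\in\{0,1\}\subseteq\F_p$ — is exactly that intended argument.
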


A straightforward substitution of the result of Proposition \ref{prop:argmax_n=2} into the right-hand side of Lemma \ref{lem:relation_max_argmax} yields an almost, but not yet minimal, polynomial expression of $\max(x_0,x_1)$.
This expression can be converted to a minimal polynomial expression.

\begin{thm}
\label{prop:max_n=2}
When $p \geq 3$, we have the following minimal polynomial expression of $\max(x_0,x_1)$:
\[
\begin{split}
\max(x_0,x_1)
&= (x_1-x_0) \sum_{d=2}^{p-2} d^{-1} (x_0+1) (x_0+2) \cdots (x_0+d) x_1 (x_1-1) \cdots (x_1-(p-d)+1) \\
&\quad + x_0 + (x_0+1)^2 (1 - (x_1+1)^{p-1}) + (1 - x_0^{p-1}) x_1^2 \enspace.
\end{split}
\]
\end{thm}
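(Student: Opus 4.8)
The plan is to start from the relation in Lemma~\ref{lem:relation_max_argmax}, which for $n=2$ reads $\max(x_0,x_1)=x_0+(x_1-x_0)\,\argmax(x_0,x_1)$, and to observe that $\argmax(x_0,x_1)\in\{0,1\}$ coincides with $\argmax^{(0)}(x_0,x_1)$. Substituting the minimal polynomial of Proposition~\ref{prop:argmax_n=2} then gives, as an identity of functions on $\F_p{}^2$,
\[
\max(x_0,x_1)=x_0+(x_1-x_0)\sum_{d=1}^{p-1} d^{-1}(x_0+1)\cdots(x_0+d)\,x_1(x_1-1)\cdots(x_1-(p-d)+1).
\]
Writing $T_d$ for the $d$-th summand, this is the ``almost minimal'' expression referred to in the text: it is correct as a function, but the factor $(x_1-x_0)$ pushes some terms above the permitted degree $p-1$, so it is not yet the minimal polynomial.

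Next I would carry out a degree bookkeeping on each $(x_1-x_0)T_d$. The product $(x_0+1)\cdots(x_0+d)$ has $x_0$-degree $d$ and the $x_1$-product has $x_1$-degree $p-d$; multiplying by $(x_1-x_0)$ raises these to $d+1$ and $p-d+1$ respectively. Hence for $2\le d\le p-2$ both degrees stay within $p-1$, so those summands are already in minimal form. The only offenders are $d=1$ (whose $x_1$-degree becomes $p$) and $d=p-1$ (whose $x_0$-degree becomes $p$); these two boundary terms are precisely what must be rewritten.

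The core of the argument is the reduction of the two boundary terms. Using the delta-function identity of Example~\ref{ex:delta} (together with $(p-1)^{-1}=-1$ and $-(p-1)\equiv 1$ in $\F_p$) I would simplify the extreme products to
\[
\prod_{i=1}^{p-1}(x_0+i)=x_0^{p-1}-1,\qquad \prod_{j=0}^{p-2}(x_1-j)=(x_1+1)^{p-1}-1,
\]
so that $T_1=(x_0+1)\big((x_1+1)^{p-1}-1\big)=-(x_0+1)\delta_{p-1}(x_1)$ and $T_{p-1}=(1-x_0^{p-1})x_1=\delta_0(x_0)\,x_1$. I would then invoke the elementary principle that $\delta_t(x)\,g(x)\equiv \delta_t(x)\,g(t)$ as functions: applying it to $(x_1-x_0)T_1$ with $\delta_{p-1}(x_1)$ and to $(x_1-x_0)T_{p-1}$ with $\delta_0(x_0)$, and using $(p-1)-x_0=-(x_0+1)$, collapses these to
\[
(x_1-x_0)T_1\equiv (x_0+1)^2\big(1-(x_1+1)^{p-1}\big),\qquad (x_1-x_0)T_{p-1}\equiv (1-x_0^{p-1})x_1^2,
\]
which are exactly the two correction terms in the statement and, for $p\ge 3$, have degree $\le p-1$ in each variable. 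Substituting these back, the middle sum $\sum_{d=2}^{p-2}(x_1-x_0)T_d$ together with $x_0$ and the two corrections is a polynomial of degree $\le p-1$ in each variable that agrees with $\max(x_0,x_1)$ as a function; by uniqueness of the minimal polynomial it is the minimal polynomial, yielding the claimed formula.

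I expect the boundary reduction to be the main obstacle: one must recognise the extreme products as delta functions, keep careful track of the field-level identities $(p-1)^{-1}=-1$ and $-(p-1)\equiv 1$, and verify that multiplying each indicator by $(x_1-x_0)$ and substituting the forced value $x_1=p-1$ (resp.\ $x_0=0$) leaves exactly $(x_0+1)^2$ (resp.\ $x_1^2$) as the surviving factor. This step is also where the hypothesis $p\ge 3$ is needed, since it guarantees that $(x_0+1)^2$ and $x_1^2$ respect the degree bound.
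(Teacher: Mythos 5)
Your proposal is correct and follows essentially the same route as the paper: both start from Lemma~\ref{lem:relation_max_argmax} combined with Proposition~\ref{prop:argmax_n=2}, note that the summands with $2\le d\le p-2$ already respect the degree bound, and convert the two boundary terms $d=1$ and $d=p-1$ into the corrections $(x_0+1)^2\bigl(1-(x_1+1)^{p-1}\bigr)$ and $(1-x_0^{p-1})x_1^2$. The only difference is mechanical: the paper expands $x_0\argmax$ and $x_1\argmax$ separately and reduces the offending terms via Wilson's theorem and term-by-term congruences, whereas you keep the factor $(x_1-x_0)$ intact, recognise the boundary products as delta polynomials via $\prod_{i=1}^{p-1}(y+i)=y^{p-1}-1$, and apply the substitution rule $\delta_t(x)g(x)\equiv\delta_t(x)g(t)$ --- an equivalent (and arguably cleaner) way of performing the same reduction.
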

\begin{proof}
Throughout the proof, a notation $f \equiv g$ means that $f$ and $g$ define an identical function on $\F_p$.
First, since $p \geq 3$, Proposition \ref{prop:argmax_n=2} implies
\[
\begin{split}
&x_0 \argmax(x_0,x_1) \\
&= x_0 \sum_{d=1}^{p-1} d^{-1} (x_0+1) (x_0+2) \cdots (x_0+d) x_1 (x_1-1) \cdots (x_1-(p-d)+1) \\
&= x_0 \sum_{d=2}^{p-2} d^{-1} (x_0+1) (x_0+2) \cdots (x_0+d) x_1 (x_1-1) \cdots (x_1-(p-d)+1) \\
&\quad + x_0 (x_0+1) x_1 (x_1-1) \cdots (x_1-(p-2)) - x_0 (x_0+1) (x_0+2) \cdots (x_0+p-1) x_1
\end{split}
\]
and we have $x_0 (x_0+1) (x_0+2) \cdots (x_0+p-1) x_1 \equiv 0$ for the last term above.
Similarly, we have
\[
\begin{split}
&x_1 \argmax(x_0,x_1) \\
&= x_1 \sum_{d=1}^{p-1} d^{-1} (x_0+1) (x_0+2) \cdots (x_0+d) x_1 (x_1-1) \cdots (x_1-(p-d)+1) \\
&= x_1 \sum_{d=2}^{p-2} d^{-1} (x_0+1) (x_0+2) \cdots (x_0+d) x_1 (x_1-1) \cdots (x_1-(p-d)+1) \\
&\quad + (x_0+1) x_1^2 (x_1-1) \cdots (x_1-(p-2)) - (x_0+1) (x_0+2) \cdots (x_0+p-1) x_1^2
\end{split}
\]
and, for the last two terms above, we have
\[
\begin{split}
(x_0+1) x_1^2 (x_1-1) \cdots (x_1-(p-2)) &\equiv -(x_0+1) x_1 (x_1-1) \cdots (x_1-(p-2)) \enspace, \\
(x_0+1) (x_0+2) \cdots (x_0+p-1) x_1^2 &\equiv (p-1)! \cdot \delta_0(x_0) x_1^2 = - (1 - x_0^{p-1}) x_1^2
\end{split}
\]
where we used $x_1^2\equiv x_1((x_i-(p-1))-1)$ and Wilson's Theorem $(p-1)! \equiv -1 \pmod{p}$.

By combining these results to Lemma \ref{lem:relation_max_argmax}, we have
\[
\begin{split}
\max(x_0,x_1)
&= x_0 - x_0 \argmax(x_0,x_1) + x_1 \argmax(x_0,x_1) \\
&\equiv (x_1-x_0) \sum_{d=2}^{p-2} d^{-1} (x_0+1) (x_0+2) \cdots (x_0+d) x_1 (x_1-1) \cdots (x_1-(p-d)+1) \\
&\quad + x_0 - x_0 (x_0+1) x_1 (x_1-1) \cdots (x_1-(p-2)) \\
&\quad - (x_0+1) x_1 (x_1-1) \cdots (x_1-(p-2)) + (1 - x_0^{p-1}) x_1^2 \\
&= (x_1-x_0) \sum_{d=2}^{p-2} d^{-1} (x_0+1) (x_0+2) \cdots (x_0+d) x_1 (x_1-1) \cdots (x_1-(p-d)+1) \\
&\quad + x_0 - (x_0+1)^2 x_1 (x_1-1) \cdots (x_1-(p-2)) + (1 - x_0^{p-1}) x_1^2
\end{split}
\]
and, for the second last term above, we have
\[
(x_0+1)^2 x_1 (x_1-1) \cdots (x_1-(p-2))
\equiv (x_0+1)^2 \cdot (p-1)! \cdot \delta_{p-1}(x_1)
= - (x_0+1)^2 (1 - (x_1+1)^{p-1})
\]
where we used Wilson's Theorem again.
Hence, we have
\[
\begin{split}
\max(x_0,x_1)
&\equiv (x_1-x_0) \sum_{d=2}^{p-2} d^{-1} (x_0+1) (x_0+2) \cdots (x_0+d) x_1 (x_1-1) \cdots (x_1-(p-d)+1) \\
&\quad + x_0 + (x_0+1)^2 (1 - (x_1+1)^{p-1}) + (1 - x_0^{p-1}) x_1^2
\end{split}
\]
which is our claim in the statement.
\end{proof}

\section{Polynomial expressions of some other functions}
\label{sec:other_functions}

In this section, we study the following two $\F_p$-valued functions that are related to $\max$ and $\argmax$:
\begin{eqnarray*}
\ismax(y;x) &=& \chi(\max(x)=y) \enspace, \\
\nummax^{(r)}(x) &=& \# \{x_i\mid \max(x)=x_i)\}^{(r)} \enspace,
\end{eqnarray*}
where $x \in \F_p{}^n$ and $y \in \F_p$.
In practical applications, these functions are useful if there are ``ties'' in the vote.

By a careful interpretation of the definitions, 
we obtain minimal polynomials of these functions (which, however, consist of a lot of terms):
\begin{prop}
Using the notation from \S \ref{sec:notation}, the following are minimal polynomial expressions:
\begin{eqnarray*}
\ismax(y;x)
&=& 
\sum_{t=0}^{p-1} \delta_t(y) \sum_{i=0}^{n-1} \left( \prod_{j<i} L_t(x_j) \cdot \delta_t(x_i) \cdot \prod_{k>i} L_{t+1}(x_k)
\right) \enspace, \\
\nummax^{(0)}(x)&=& \sum_{i=0}^{n-1} \chi(\max(x)=x_i)  \\
&=&  \sum_{i=0}^{n-1}  \sum_{0 \le t \le p-1} \left( 
 \delta_t(x_i) \prod_{j\neq i} L_{t+1} (x_j)
  \right)
  \enspace, \\
  \nummax^{(r)}(x) &=& \sum_{k=1}^{{n}}  k^{(r)} \cdot \chi(\# \{{i}\mid \max(x)=x_i\}=k)  \\
&=&  \sum_{k=1}^{{n}}  k^{(r)}  \left( 
     \sum_{I\in {n\choose k}}
     \sum_{0 \le t \le p-1} \left( 
           \prod_{i\in I}\delta_t(x_i) \prod_{j\not\in I} L_t (x_j)  \right)
  \right) \enspace.
\end{eqnarray*}
\end{prop}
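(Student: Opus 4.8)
The plan is to treat all three identities by one mechanism: each summand on the right-hand side is the truth function of an explicitly describable event, and the summations are arranged so that these events are mutually exclusive and together recover the quantity being computed. I would establish the $\ismax$ identity first, since it is the template for the others. Fix $t \in \F_p$ and consider the inner sum $\sum_{i=0}^{n-1} \prod_{j<i} L_t(x_j)\,\delta_t(x_i)\,\prod_{k>i} L_{t+1}(x_k)$. Reading $L_t(x_j) = \chi(x_j < t)$, $\delta_t(x_i) = \chi(x_i = t)$, and $L_{t+1}(x_k) = \chi(x_k \le t)$, the $i$-th summand equals $1$ exactly when $x_i = t$, every earlier entry is strictly smaller, and every later entry is at most $t$; equivalently, $i$ is the least index attaining the value $t$ while no entry exceeds $t$. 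For fixed $t$ these conditions hold for at most one index $i$ (the first occurrence of the maximum), so the inner sum equals $\chi(\max(x) = t)$. Multiplying by $\delta_t(y)$ and summing over $t$ isolates $t = y$, giving $\chi(\max(x) = y) = \ismax(y;x)$.

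I would then derive the two $\nummax$ formulas from the same idea, now writing the indicator per achieving index rather than per first occurrence. For $\nummax^{(0)}$, fixing $i$ and reading $\delta_t(x_i)\prod_{j \ne i} L_{t+1}(x_j)$ as the event ``$x_i = t$ and every other entry is $\le t$'', the sum over $t$ collapses to $\chi(x_i = \max(x))$; summing over $i$ counts the number $C$ of indices attaining the maximum, and since the computation takes place in $\F_p$, this yields $C \bmod p$, which is precisely the $0$-th $p$-ary digit $C^{(0)} = \nummax^{(0)}(x)$. For general $r$, I would instead partition by the exact set $I$ of maximizing indices: for a subset $I$ of size $k$ and a value $t$, the product $\prod_{i \in I} \delta_t(x_i)\prod_{j \notin I} L_t(x_j)$ equals $1$ exactly when the entries indexed by $I$ all equal $t$ and all remaining entries lie strictly below $t$, i.e. when the maximum is $t$ and its set of achievers is exactly $I$. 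Summing over all $t$ and all $I$ with $|I| = k$ therefore gives $\chi(C = k)$, and weighting by $k^{(r)}$ and summing over $k$ recovers $C^{(r)} = \nummax^{(r)}(x)$ via the trivial digit decomposition $C^{(r)} = \sum_{k=1}^n k^{(r)}\chi(C = k)$.

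Finally I would verify minimality. In every product appearing on the right-hand sides, each variable $x_i$ occurs in exactly one factor, which is one of the single-variable polynomials $\delta_t$ or $L_t$ from Example \ref{ex:delta}, each of degree at most $p-1$; hence every monomial has degree at most $p-1$ in each variable, and this property is preserved under the outer summations. By the uniqueness of the minimal polynomial recalled in the introduction, the claimed expressions are the minimal polynomials.

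The main obstacle is the bookkeeping that makes the collapsing sums exact rather than merely correct up to over- or under-counting: one must check that the strict/non-strict inequality pattern ($L_t$ versus $L_{t+1}$) genuinely singles out a unique index (for $\ismax$) or a unique pair $(t,I)$ (for $\nummax^{(r)}$), and that the boundary conventions $L_0 \equiv 0$ and $L_p \equiv 1$ make the extreme cases $t = 0$ and $t = p-1$ behave correctly---in particular that the all-zero input, where the maximum is $0$ and is attained by every coordinate, is counted exactly once. I expect this disjointness-and-exhaustiveness verification, not any algebraic manipulation, to be the only place where genuine care is required.
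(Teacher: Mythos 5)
Your proposal is correct and follows essentially the same route as the paper: interpret each product of $\delta_t$'s and $L_t$'s as the truth function of an event, check that these events are mutually exclusive and exhaust the relevant condition (uniqueness of the first maximizing index for $\ismax$, of the maximizing index per term for $\nummax^{(0)}$, and of the pair $(t,I)$ for $\nummax^{(r)}$), then sum; minimality follows since each variable sits in a single factor of degree at most $p-1$. The paper's proof is just a terser version of exactly this argument, so nothing further is needed.
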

\begin{proof}
For the function $\ismax$, given a constant $t \in \F_p$, we have $\max(x) = t$ if and only if there is an index $i$ satisfying that $x_j < t$ for every $j < i$, $x_i = t$, and $x_k \leq t$ for every $k > i$; such an index $i$ is unique if exists.
This observation (in particular, the uniqueness of $i$) implies our claim.

{For the function $\nummax^{(0)}$, the function value is obtained by first counting the number of indices $i$ with $\max(x) = x_i$ (or equivalently, $\chi(\max(x) = x_i) = 1$) and then taking the remainder of the number modulo $p$ (i.e., just considering the number in $\F_p$).
Moreover, given a constant $t \in \F_p$, we have $\max(x) = x_i = t$ if and only if $x_i = t$ and $x_j \leq t$ for every $j \neq i$.
This observation implies our claim.}

{For the function $\nummax^{(r)}$, given an integer $k \geq 1$ and a constant $t \in \F_p$, we have $\max(x) = t$ and $\# \{i \mid \max(x)=x_i\}=k$ if and only if there is a $k$-element set $I$ of indices satisfying that $x_i = t$ for every $i \in I$ and $x_j < t$ for every $j \not\in I$; such a set $I$ is unique if exists.
This observation (in particular, the uniqueness of $I$) implies our claim (note that $0^{(r)} = 0$ for any $r$).}
\end{proof}

When $p=2$ and $3$, we give the following explicit minimal polynomial expressions of $\ismax(y;x)$:
\begin{prop}
\label{nu:prop:7.2}
When $p=2$, a minimal polynomial expression of $\ismax(y;x)$ is given by
\[
\ismax(y;x)= y+\prod_{i=0}^{n-1}(1+x_i) \enspace.
\]
When $p=3$, a minimal polynomial expression of $\ismax(y;x)$ is given by
\[
\ismax(y;x)= -y^2+y \left( \prod_{i=0}^{n-1}(1+x_i)^2+\prod_{i=0}^{n-1}(1-x_i^2) + 1 \right) + \prod_{i=0}^{n-1} (1 - x_i^2) \enspace.
\]
\end{prop}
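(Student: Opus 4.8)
The plan is to verify both formulae directly as functions, exploiting the fact that the degree conditions for minimality are manifestly satisfied, so that agreement on all inputs suffices. In the right-hand side of each formula the variable $y$ occurs with degree at most $p-1$ (degree $1$ for $p=2$, degree $2$ for $p=3$), and each $x_i$ occurs only through the factors $1+x_i$, $(1+x_i)^2$, or $1-x_i^2$, hence with degree at most $p-1$ as well. Therefore, by the uniqueness of the minimal polynomial, it is enough to check that each expression computes $\ismax(y;x)=\chi(\max(x)=y)$ on every input.

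The key reduction is to write $\ismax(y;x)=\delta_{m}(y)$ with $m=\max(x)$ and to regroup the right-hand side as a polynomial in $y$ whose coefficients are polynomials in $x$. Following the analysis in the proof of Proposition \ref{psi-p-3}, I would introduce the auxiliary polynomials $A:=\prod_{i=0}^{n-1}(1+x_i)^2$ and $B:=\prod_{i=0}^{n-1}(1-x_i^2)$ (for $p=2$ the single product $\prod_i(1+x_i)$ plays both roles), and observe that they serve as indicator functions of the value of $\max(x)$: namely $A=1$ precisely when $\max(x)\le p-2$ and $A=0$ when $\max(x)=p-1$, while $B=1$ precisely when $\max(x)=0$ and $B=0$ otherwise. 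These identifications follow from $(1+x_i)=0 \iff x_i=p-1$ together with $1-x_i^2=\delta_0(x_i)$ in $\F_3$.

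With these indicators in hand, the verification becomes a finite case analysis on the value $m=\max(x)\in\{0,1,\dots,p-1\}$. For each fixed $m$ the polynomials $A$ and $B$ take constant values, so the right-hand side collapses to an explicit quadratic (or linear, for $p=2$) polynomial in $y$, and the claim reduces to matching it against the delta function $\delta_m(y)=1-(y-m)^{p-1}$ from Example \ref{ex:delta}. For $p=3$ this amounts to checking, for $(A,B)=(1,1),(1,0),(0,0)$ respectively, that $-y^2+y(A+B+1)+B$ equals $\delta_0(y)$, $\delta_1(y)$, $\delta_2(y)$; for $p=2$ one checks $y+\prod_i(1+x_i)$ against $\delta_0(y)$ and $\delta_1(y)$ in the two cases $\max(x)=0$ and $\max(x)=1$.

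I expect the only real content to be the choice of the coefficient combination $A+B+1$ multiplying $y$ (with $B$ as the constant term) for $p=3$: this is exactly the solution of the small interpolation problem requiring a single polynomial in $(y,A,B)$ to reproduce $\delta_0(y)$, $\delta_1(y)$, and $\delta_2(y)$ as $(A,B)$ ranges over $(1,1),(1,0),(0,0)$. Determining this combination (equivalently, solving for the coefficients so that the constant term of $\delta_m(y)$ is $B$ and the linear coefficient is $A+B+1$) is the main step; once it is fixed, the remaining verifications are routine substitutions, using $3y=0$ and $3=0$ in $\F_3$ to simplify $\delta_1$ and $\delta_2$.
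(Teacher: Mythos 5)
Your proof is correct, but it verifies the formulae by a different mechanism than the paper. Both arguments start from the same reduction $\ismax(y;x)=\delta_{\max(x)}(y)$, and both observe that the degree bounds make uniqueness of the minimal polynomial applicable. From there, however, the paper proceeds by algebraic substitution rather than case analysis: it expands $1-(y-\max(x))^{p-1}$ (for $p=3$ this gives $-y^2-y\max(x)+1-\max(x)^2$), identifies $1-\max(x)^2=\prod_i(1-x_i^2)$, and then substitutes the already-established expression of $\max$ from Proposition \ref{psi-p-3} --- more precisely the identity $\max(x)=2\bigl(\prod_i(1+x_i)^2+\prod_i(1-x_i^2)\bigr)-1$ recorded inside its proof --- so that the coefficient $A+B+1$ of $y$ is \emph{derived} rather than found by interpolation. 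You instead treat $A=\prod_i(1+x_i)^2$ and $B=\prod_i(1-x_i^2)$ as indicator functions of $\max(x)\le p-2$ and $\max(x)=0$, freeze the value $m=\max(x)$, and check the resulting polynomial in $y$ against $\delta_m(y)$ in each case; these checks indeed pass, e.g.\ in $\F_3$ one gets $-y^2+3y+1=\delta_0(y)$, $-y^2+2y=\delta_1(y)$ and $-y^2+y=\delta_2(y)$ at $(A,B)=(1,1),(1,0),(0,0)$ respectively (for the last, note that the "indicator" property of nonzero squares being $1$ uses Fermat's Little Theorem, which you should cite explicitly rather than only $(1+x_i)=0\iff x_i=p-1$). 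What your route buys is self-containedness: you never invoke Proposition \ref{psi-p-3} as a black box, only elementary indicator identities, and the whole proof is a finite pointwise verification in the spirit of the paper's own proof of Proposition \ref{psi-p-3}. What the paper's route buys is an explanation of where the formula comes from: it is literally the delta polynomial evaluated at the minimal polynomial of $\max$, so the coefficient combination multiplying $y$ falls out of the substitution and no interpolation step is needed.
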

\begin{proof}
First, we note that $\ismax(y;x) = 1 - (y - \max(x))^{p-1}$ by the definition of the function.
When $p = 2$, the right-hand side becomes $y + \max(x) + 1$ and now the claim follows from Proposition $\ref{psi-p-2}$.

On the other hand, when $p = 3$, we have
\[
\ismax(y;x)
= 1 - (y - \max(x))^2
= - y^2 - y \max(x) + 1 - \max(x)^2 \enspace.
\]
Now we have $1 - \max(x)^2 = 1$ if $x_i = 0$ for all $i$, and $= 0$ otherwise.
This implies that
\[
1 - \max(x)^2
= \prod_{i=0}^{n-1} \delta_0(x_i)
= \prod_{i=0}^{n-1} (1 - x_i^2)
\]
and now the claim follows from Proposition \ref{psi-p-3}.
\end{proof}

\begin{ex}
When $p=2$, a minimal polynomial expression of $\nummax^{(r)}(x)$ is given by
\[
\nummax^{(r)}(x)= e_{2^r} + n^{(r)} \prod_{i=0}^{n-1}(1-x_i)\enspace.
\]
This can be seen by the following argument.
When $\max(x)=0$, i.e., $x_i=0$ for all $i$, 
we have $\nummax^{(r)}=n^{(r)}$ for any $r$,
which accounts for the second term.
As $(\sum_{i=0}^{n-1}x_i)^{(r)} \equiv e_{2^r}(x) \bmod 2$ by the result of \cite{BPP00} (see also \cite[Example 1]{carry}),
we obtain the equation.
\end{ex}

\section{Future Subject: Multi-digit case}
\label{sec:multi-digit}

We note that the previous sections studied functions with single-digit input values taken from $\F_p$; in such a formulation, to handle larger input values we have to choose a larger prime $p$ as well, which will result in polynomial expressions of the functions with higher degrees and much more involved structures.
Another option to handle larger values is to express the input values in \emph{multi-digit} forms; now each component of the input is identified with its $p$-ary expansion, therefore the entire input is regarded as a two-dimensional matrix over $\F_p$ rather than a one-dimensional vector (over a larger field).
In the latter model, the base field $\F_p$ can be kept small even if the input values become larger.
On the other hand, a large input value will then increase the total number of components of the input matrix, but this shortcoming might sometimes be avoidable in practice by implementation techniques such as parallel computation.
This suggests that polynomial expressions of functions with multi-digit inputs are important as well.

However, even if the polynomial expression of a given function is understood well for single-digit input cases, it is in general a non-trivial task to deduce a polynomial expression of the function for multi-digit input cases.
We leave such multi-digit extensions of the results in this paper as a future research topic, 
and we just conclude this paper with an example:
\begin{prop}
Let $p = 2$, and consider two-bit inputs $y = 2y_1+y_0\in \{0,1,2,3\}$ and $x_i = 2x_{i,1}+x_{i,0}\in \{0,1,2,3\}$ for $0 \leq i \leq n-1$, 
where $y_j,x_{i,j} \in \F_2$.
Then the following is a minimal polynomial expression:
\begin{multline*}
\ismax(y;x) = \ismax(y;x_0,x_1,\dots,x_{n-1}) \\
= y_1y_0
+y_1 \prod_{i=0}^{n-1} (1+x_{i,1}x_{i,0}) 
+(y_1 + y_0)\prod_{i=0}^{n-1}(1+x_{i,1}) 
+(y_1 + 1)\prod_{i=0}^{n-1} (1+x_{i,1})(1+x_{i,0}) \enspace.
\end{multline*}
\end{prop}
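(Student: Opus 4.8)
The plan is to use the fact that over $\F_2$ the minimal polynomial is nothing but the unique multilinear representative, so it suffices to check that the claimed right-hand side, which I will call $R(y;x)$, computes the correct function value at every point. First I would note that $R$ is manifestly multilinear: each of $y_1,y_0$ and each of the $x_{i,1},x_{i,0}$ occurs to degree at most $1$ in every term, since the three building blocks $1+x_{i,1}$, $1+x_{i,1}x_{i,0}$, and $(1+x_{i,1})(1+x_{i,0})$ are all multilinear. Hence $R$ is a legitimate candidate for the minimal polynomial, and by uniqueness it will be enough to verify that $R(y;x)=\ismax(y;x)=\chi(\max(x)=y)$ as functions.

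The key step is to recognize the three products in $R$ as indicator functions that depend on $x$ only through $m:=\max(x)\in\{0,1,2,3\}$. Explicitly, I would establish
\[
\prod_{i=0}^{n-1}(1+x_{i,1}) = \chi(\max(x)\le 1),\quad
\prod_{i=0}^{n-1}(1+x_{i,1}x_{i,0}) = \chi(\max(x)\le 2),\quad
\prod_{i=0}^{n-1}(1+x_{i,1})(1+x_{i,0}) = \chi(\max(x)=0).
\]
Each follows by inspecting a single factor over $\F_2$: $1+x_{i,1}$ equals $1$ exactly when $x_i$ has no high bit, $1+x_{i,1}x_{i,0}$ equals $1$ exactly when $x_i\neq 3$, and $(1+x_{i,1})(1+x_{i,0})$ equals $1$ exactly when $x_i=0$; the product is then $1$ precisely when the corresponding condition holds for all $i$, i.e.\ when $m\le 1$, $m\le 2$, or $m=0$ respectively.

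With these identifications, the whole of $R(y;x)$ depends on $x$ only through $m=\max(x)$, because the triple $\bigl(\chi(m\le 1),\chi(m\le 2),\chi(m=0)\bigr)$ takes the values $(1,1,1),(1,1,0),(0,1,0),(0,0,0)$ as $m$ runs through $0,1,2,3$. The proof then reduces to a finite computation: for each of the four values of $m$ I would substitute the corresponding triple into $R$ and check, over the four choices of $(y_1,y_0)$, that $R(y;x)$ equals $1$ exactly when $y=m$ and $0$ otherwise. This is a $4\times 4$ table and is purely mechanical.

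I expect the main obstacle to be the second step, namely correctly pinning down which indicator each product represents, and in particular recognizing that the low bit of $\max(x)$ is governed jointly by whether some input equals $3$ (captured by $\chi(m\le 2)$) and by $\chi(m\le 1)$, rather than by any product of the low bits alone; this is exactly why the joint product $\prod_i(1+x_{i,1})(1+x_{i,0})$, and not a product of separate sums, must appear in the formula. Once the three products are recast as functions of $m$, no real difficulty remains, since the monotone nesting $\{m=0\}\subseteq\{m\le 1\}\subseteq\{m\le 2\}$ makes the table collapse cleanly onto $\chi(m=y)$.
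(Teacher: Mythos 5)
Your proof is correct, and it organizes the verification in a genuinely different (and arguably cleaner) way than the paper does. Both arguments begin identically: the right-hand side is multilinear, so by uniqueness of the minimal polynomial over $\F_2$ it suffices to check pointwise equality, and both interpret the products as indicator functions of bit conditions. But where the paper then splits into four cases according to $(y_1,y_0)$ and in each case reasons structurally about the bit patterns of the $x_i$ --- e.g.\ for $y_1=1,y_0=0$ it introduces the set $I$ of indices with $x_{i,1}=1$ and shows the surviving term collapses to $\prod_{i\in I}(1+x_{i,0})$ --- you instead prove the sharper statement that each of the three products depends on $x$ only through $m=\max(x)$, being $\chi(m\le 1)$, $\chi(m\le 2)$, and $\chi(m=0)$ respectively. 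This buys you a verification that is genuinely finite and independent of $n$: writing $A=\chi(m\le 2)$, $B=\chi(m\le 1)$, $C=\chi(m=0)$, the polynomial $y_1y_0+y_1A+(y_1+y_0)B+(y_1+1)C$ evaluates, for $m=0,1,2,3$, to $y_1y_0+y_1+y_0+1$, $y_1y_0+y_0$, $y_1y_0+y_1$, and $y_1y_0$, which are exactly $\chi(y=0)$, $\chi(y=1)$, $\chi(y=2)$, $\chi(y=3)$; so the deferred $4\times 4$ table indeed closes, and nothing is missing. The paper's route avoids naming $\max(x)$ but must then handle arbitrary configurations of the $x_i$ inside each $y$-case; your route through $m$ isolates all the combinatorics in three one-line indicator identities and leaves only a mechanical check.
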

\begin{proof}
As the right-hand side of the statement satisfies the minimality conditions for the degrees, it suffices to verify that the values of both terms are equal for any input values.

First we note that, for any set $I$ of index pairs $(i,j)$, we have
\[
\prod_{(i,j) \in I} (1 + x_{i,j}) = \chi(x_{i,j} = 0 \mbox{ for all } (i,j) \in I) \enspace.
\]
Similarly, we have
\[
\prod_{i} (1 + x_{i,1}x_{i,0}) = \chi(\mbox{ for any $i$, either $x_{i,1} = 0$ or $x_{i,0} = 0$ holds }) \enspace.
\]

We divide the argument according to the values of $y_1$ and $y_0$.
When $y_1=y_0=0$, we have $\ismax(y;x) = 1$ if and only if $x_{i,1} = x_{i,0} = 0$ for every index $i$.
Now the right-hand side of the statement becomes $\prod_{i=0}^{n-1} (1 + x_{i,1})(1 + x_{i,0})$, which coincides with $\ismax(y;x)$ by the remark above.

When $y_1=0$ and $y_0=1$, the right-hand side of the statement becomes $\prod_{i=0}^{n-1} (1 + x_{i,1}) + \prod_{i=0}^{n-1} (1 + x_{i,1})(1 + x_{i,0})$.
Now if at least one of $x_{i,1}$ is $1$, then we have $\ismax(y;x) = 0$ by definition, while the value of the polynomial becomes $0$ as well by the remark above, as desired.
In the remaining case where $x_{i,1} = 0$ for every $i$, we have $\ismax(y;x) = 1$ if and only if $x_{i,0} = 1$ for some $i$; while the polynomial now becomes $1 + \prod_{i=0}^{n-1} (1 + x_{i,0})$.
By the remark above, the value of the polynomial coincides with $\ismax(y;x)$, as desired.

When $y_1=1$ and $y_0=0$, the right-hand side of the statement becomes $\prod_{i=0}^{n-1} (1+x_{i,1}x_{i,0}) + \prod_{i=0}^{n-1}(1+x_{i,1})$.
Now if $x_{i,1} = 0$ for every $i$, then we have $\ismax(y;x) = 0$ by definition, while the value of the polynomial becomes $1 + 1 = 0$ as well by the remark above, as desired.
In the remaining case where $x_{i,1} = 1$ for some $i$, let $I$ denote the set of indices $i$ with $x_{i,1} = 1$ (hence now $I \neq \emptyset$).
In this case, we have $\ismax(y;x) = 1$ if and only if $x_{i,0} = 0$ for every $i \in I$; while the polynomial now becomes $\prod_{i \in I} (1 + x_{i,0})$.
By the remark above, the value of the polynomial coincides with $\ismax(y;x)$, as desired.

Finally, when $y_1=y_0=1$, the right-hand side of the statement becomes $1 + \prod_{i=0}^{n-1} (1+x_{i,1}x_{i,0})$.
By the remark above, this polynomial takes the value $1$ if and only if $x_{i,1} = x_{i,0} = 1$ for some index $i$; this condition is precisely the same as the condition for $\ismax(y;x)$ in the present case to take the value $1$, by definition.
This completes the proof.
\end{proof}

 
%

\end{document}